\begin{document}
\title{Compact differences of composition operators}

\author{Katherine Heller}
\address{Katherine Heller, Department of Mathematics, P. O. Box 400137,
University of Virginia, Charlottesville, VA 22904}
\email{kch3c@virginia.edu}

\author{Barbara D. MacCluer}
\address{Barbara D. MacCluer, Department of Mathematics, P.O. Box 400137,
University of Virginia, Charlottesville, VA 22904}
\email{bdm3f@virginia.edu}

\author{Rachel J. Weir}
\address{Rachel J. Weir, Department of Mathematics,
Allegheny College, Meadville, PA 16335}
\email{rweir@allegheny.edu}

\thanks{The third named author would like to thank the Allegheny College Academic Support Committee
for funding provided during the development of this paper.}


\date{June 8, 2010}

\newtheorem{thm}{Theorem}
\newtheorem{prop}{Proposition}
\newtheorem{lemma}{Lemma}
\newtheorem{cor}{Corollary}

\newtheorem{remark}{\bf Remark}[section]
\newtheorem{example}[remark]{\bf Example}
\newtheorem{definition}[remark]{\bf Definition}

\newcommand{\p}{\varphi}
\newcommand{\cp}{C_{\varphi}}
\newcommand{\D}{{\mathbb D}}
\newcommand{\B}{{\mathcal B}}
\newcommand{\com}{{[C_{\varphi}^*,C_{\varphi}]}}

 \newcommand{\ebox}{\hspace*{1em}\linebreak[0]\hspace*{1em}\hfill\rule[-1ex]{
.1in}{.1in}\\}
 \newcommand{\pf}{\noindent \bf Proof.\rm\ \ }

\begin{abstract}
When $\varphi$ and $\psi$ are linear-fractional self-maps of the unit
ball $B_N$ in ${\mathbb C}^N$, $N\geq 1$, we show that the difference
$C_{\varphi}-C_{\psi}$ cannot be non-trivially compact on either the
Hardy space $H^2(B_N)$ or any weighted Bergman space $A^2_{\alpha}(B_N)$. 
Our arguments emphasize geometrical properties of the inducing maps
$\varphi$ and $\psi$.
\end{abstract}

\maketitle

\section{Introduction}
For a domain $\Omega$ in ${\mathbb C}^N$, where $N\geq 1$, and an analytic
map $\varphi:\Omega\rightarrow \Omega$, we define the composition
operator $C_{\varphi}$ by $C_{\varphi}(f)=f\circ\varphi$, where $f$ is analytic
in $\Omega$.  In the case that $\Omega=\D$, the unit disk in ${\mathbb C}$,
every composition operator acts boundedly on the Hardy space
$$H^2(\D)=\left\{f\mbox{ analytic in }\D:\|f\|_{H^2}^2\equiv\lim_{r\rightarrow 1^-}\int_0^{2\pi}|f(re^{i\theta})|^2\frac{d\theta}{2\pi}<\infty\right\}$$
and weighted Bergman spaces
$$A^2_{\alpha}(\D)=\left\{f\mbox{ analytic in }\D:\|f\|_{\alpha}^2\equiv(\alpha+1)\int_{\D}|f(z)|^2(1-|z|^2)^{\alpha}dA(z)<\infty\right\},$$
where $\alpha>-1$ and $dA$ is normalized area measure.
If $\D$ is replaced by the unit ball $B_N$ in ${\mathbb C}^N$, $N>1$, it is no longer the
case that every composition operator is bounded on the Hardy or weighted Bergman space
of the ball (these spaces are defined in Section~3). However for a large class of maps $\varphi$,
including the rich class of linear-fractional maps, boundedness does continue to hold.

Many properties of composition operators have been studied over the past four decades; the
monographs \cite{cmbook} and \cite{Sbook} give an overview of the work before the mid-1990's.
Recently there has been considerable interest in studying algebras of composition operators,
often modulo the ideal of compact operators (see, for example, \cite{jury1}, \cite{jury2}, \cite{kmm1},
\cite{kmm}, \cite{kmm2}).  In this direction, the question of when a difference
$C_{\varphi}-C_{\psi}$ is compact naturally arises.

The main result of this paper shows that if $\varphi$ and $\psi$ are linear-fractional
self-maps of $B_N$, then $C_{\varphi}-C_{\psi}$ cannot be non-trivially compact; i.e.
if the difference is compact,
either $C_{\varphi}$ and $C_{\psi}$ are individually compact (this happens precisely
when $\|\varphi\|_{\infty}<1$ and $\|\psi\|_{\infty}<1$), or $\varphi=\psi$.
While our focus is on the several variable case,
we begin with a simplified proof of this result in one variable.
The fact that a difference of linear-fractional
composition operators cannot be non-trivially compact on
$H^2(\D)$ or $A^2_{\alpha}(\D)$ was first obtained by
P. Bourdon \cite{bour} and J. Moorhouse \cite{moor} as a consequence of
results on the compactness of a difference of more general composition
operators in one variable.  Our approach here is self-contained, and 
takes a geometric perspective,
which will allow us to generalize our arguments to several variables.
The analogy between the one and several variable arguments is not perfect,
owing to a number of phenomena that are present when $N>1$
but do not occur when $N=1$.  Nevertheless, our geometric approach when
$N=1$ leads us to a tractable way to proceed when $N>1$, and highlights the
new phenomena which must be addressed.
Since our arguments
are essentially the same for either the Hardy or weighted Bergman spaces,
in what follows we will let ${\mathcal H}$ denote any of these spaces.
Our starting point, in either the disk or the ball, will be the following
necessary condition for compactness of $C_{\varphi}-C_{\psi}$.

\begin{thm}\label{phdist}[\,\cite{mac-weir},\cite{moor}\,]
Suppose $\varphi, \psi$ are holomorphic self-maps of $\D$ (respectively, $B_N$) and suppose
that there exists a sequence of points $z_n$ tending to the boundary
of  $\D$ ($B_N$) along which
\begin{equation}\label{phexpress}
\rho(\varphi(z_n), \psi(z_n))\left(\frac{1-|z_n|^2}{1-|\varphi(z_n)|^2}+\frac{1-|z_n|^2}{1-|\psi(z_n)|^2}\right)
\end{equation}
does not converge to zero, where $\rho(\varphi(z_n), \psi(z_n))$ is defined by
\begin{equation}\label{defofrho}
1-(\rho(\varphi(z_n), \psi(z_n)))^2=\frac{(1-|\varphi(z_n)|^2)(1-|\psi(z_n)|^2)}{|1-\langle\varphi(z_n),\psi(z_n)\rangle|^2}.
\end{equation}
Then $C_{\varphi}-C_{\psi}$ is not compact on ${\mathcal H}$.
\end{thm}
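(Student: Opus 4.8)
The plan is to prove the contrapositive in the form of a direct construction: assuming the existence of a boundary sequence $z_n$ along which the quantity in (\ref{phexpress}) stays bounded away from zero, exhibit a bounded sequence $f_n$ in $\mathcal{H}$ converging to zero weakly (in particular uniformly on compact subsets) for which $\|(\cp - C_\psi)f_n\|$ does not tend to zero. This is the standard route to non-compactness, since a compact operator must send such sequences to norm-null sequences. The natural test functions are the normalized reproducing kernels: writing $K_w$ for the reproducing kernel of $\mathcal{H}$ at a point $w \in \D$ (or $B_N$), one sets $f_n = K_{w_n}/\|K_{w_n}\|$ for a suitably chosen sequence $w_n \to \partial$, and uses the reproducing property to compute $\langle (\cp - C_\psi)^* g, f_n\rangle$-type inner products, or more directly estimates $\|(\cp - C_\psi)f_n\|$ from below by pairing against another kernel.

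First I would recall the explicit form of the normalized kernels and the key identity
\[
\left\langle \frac{K_{\varphi(z_n)}}{\|K_{\varphi(z_n)}\|},\ \frac{K_{\psi(z_n)}}{\|K_{\psi(z_n)}\|} \right\rangle
= \left(\frac{(1-|\varphi(z_n)|^2)(1-|\psi(z_n)|^2)}{|1-\langle\varphi(z_n),\psi(z_n)\rangle|^2}\right)^{s}
= (1 - \rho(\varphi(z_n),\psi(z_n))^2)^{s},
\]
where $s$ is the relevant kernel exponent ($s=N$ for $H^2(B_N)$, $s = N+1+\alpha$ for $A^2_\alpha(B_N)$, with the $N=1$ cases as special instances). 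Next I would use the change-of-variables/adjoint relation connecting $\cp^* K_w$ with $K_{\varphi(w)}$ together with the submean-value or Littlewood-type norm estimates that give two-sided control of $\|\cp K_{z_n}\|$ in terms of $\|K_{z_n}\|$ and the ratio $(1-|z_n|^2)/(1-|\varphi(z_n)|^2)$. Combining these, a direct expansion of $\|(\cp - C_\psi)^* \big(K_{z_n}/\|K_{z_n}\|\big)\|^2$ produces, up to bounded multiplicative factors, precisely an expression controlled above and below by the square of (\ref{phexpress}); since that expression does not go to zero, neither does the norm, and since $K_{z_n}/\|K_{z_n}\| \to 0$ weakly as $z_n \to \partial$, compactness of $(\cp - C_\psi)^*$, hence of $\cp - C_\psi$, is ruled out.

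The main obstacle is the bookkeeping in the lower bound: one must be careful that the cross term in $\|\cp^* k_{z_n} - C_\psi^* k_{z_n}\|^2 = \|\cp^* k_{z_n}\|^2 - 2\,\mathrm{Re}\langle \cp^* k_{z_n}, C_\psi^* k_{z_n}\rangle + \|C_\psi^* k_{z_n}\|^2$ does not conspire to cancel the diagonal terms. The pseudohyperbolic factor $\rho(\varphi(z_n),\psi(z_n))$ is exactly what quantifies this: when $\rho$ is bounded away from zero the kernels at $\varphi(z_n)$ and $\psi(z_n)$ are uniformly non-parallel and the cross term is strictly dominated, whereas when $\rho \to 0$ one instead extracts a lower bound from the difference of the \emph{sizes} $\|\cp^* k_{z_n}\| $ and $\|C_\psi^* k_{z_n}\|$ — here one rewrites things in terms of the pseudohyperbolic distance between $z_n$'s images and the boundary weights, and the algebraic identity $1 - (1-\rho^2)^{1/2}$ comparable to $\rho^2$ does the work. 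Passing to a subsequence to stabilize which of the two regimes occurs, and in the several-variable setting taking care that $\langle \varphi(z_n),\psi(z_n)\rangle$ is genuinely complex-valued (so $|1 - \langle\varphi(z_n),\psi(z_n)\rangle|$ rather than a real expression appears), are the points requiring the most attention; the rest is routine once the kernel estimates are in hand.
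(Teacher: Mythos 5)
The paper does not prove Theorem~\ref{phdist}; it is imported without proof from \cite{mac-weir} and \cite{moor}, and your normalized--reproducing-kernel argument is precisely the proof given in those sources, so in substance you have reconstructed the intended argument. Two small points to tidy: the normalized inner product of $K_{\varphi(z_n)}$ and $K_{\psi(z_n)}$ has modulus $(1-\rho^2)^{s/2}$, not $(1-\rho^2)^{s}$ (harmless for the lower bound, since $1-(1-\rho^2)^{s/2}\gtrsim\rho^2$ either way); and the correct dichotomy for the case analysis is not ``$\rho$ bounded below versus $\rho\to 0$'' --- by Schwarz--Pick the ratios $(1-|z_n|^2)/(1-|\varphi(z_n)|^2)$ and $(1-|z_n|^2)/(1-|\psi(z_n)|^2)$ are bounded above, so the hypothesis already forces $\rho$ to stay bounded below along the subsequence, and the real split is whether these two ratios are comparable (then the term $2\sqrt{ab}\,(1-(1-\rho^2)^{s/2})$ does the work) or wildly different (then $(\sqrt{a}-\sqrt{b})^2$ does). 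Your two regimes do cover both cases, so this is a matter of stating the trigger correctly rather than a gap.
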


For $z$ and $w$ in $\D$ or $B_N$, the quantity $\rho(z,w)$ will be referred to as the pseudohyperbolic distance
between $z$ and $w$,
so that the first factor in~(\ref{phexpress}) is the pseudohyperbolic
distance between $\varphi(z_n)$ and $\psi(z_n)$.  In the disk, the pseudohyperbolic
distance has the simpler expression
\begin{equation}\label{phdisksimp}
\rho_{\D}(z, w)=\left|\frac{z-w}{1-\overline{z}w}\right|.
\end{equation}

 \section{Results in one variable}
 
Throughout this section ${\mathcal H}$ denotes either the Hardy space $H^2(\D)$ or
a weighted Bergman space $A^2_{\alpha}(\D)$, as defined in the previous section.

\begin{thm}\label{mainforD}
Suppose that $\varphi$ and $\psi$ are linear-fractional self-maps of $\D$.
If the difference $C_{\varphi}-C_{\psi}$ is compact on ${\mathcal H}$, then
either $\varphi=\psi$, or both $\|\varphi\|_{\infty}$ and $\|\psi\|_{\infty}$
are strictly less than $1$, so that $C_{\varphi}$ and $C_{\psi}$ are individually compact.
\end{thm}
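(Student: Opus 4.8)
The plan is to argue by contradiction: assume $C_\p - C_\psi$ is compact, that $\p \neq \psi$, and that at least one of $\|\p\|_\infty, \|\psi\|_\infty$ equals $1$; we then produce a sequence $z_n \to \partial\D$ along which the expression~(\ref{phexpress}) of Theorem~\ref{phdist} fails to tend to zero, contradicting compactness. First I would recall the standard classification of linear-fractional self-maps of $\D$: each such $\p$ either maps $\overline{\D}$ into $\D$ (in which case $\|\p\|_\infty < 1$ and $C_\p$ is compact), or $\p$ has a unique boundary fixed point, or more generally $\overline{\D}$ touches $\partial\D$ in exactly one point $\zeta$ with $\p(\zeta) = \eta \in \partial\D$. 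So if the difference is compact and $\p \neq \psi$, we may assume (relabeling if necessary) that $\|\p\|_\infty = 1$, so $\p$ carries some $\zeta \in \partial\D$ to some $\eta \in \partial\D$. The geometric content of linear-fractionality that I would exploit is the angular-derivative/Julia–Carathéodory behavior at $\zeta$: as $z \to \zeta$ nontangentially, $(1-|z|^2)/(1-|\p(z)|^2)$ converges to a finite nonzero limit (the reciprocal of the angular derivative), and in fact for linear-fractional maps this ratio is controlled along \emph{all} approach regions, which is what makes the one-variable case tractable.

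The main case to handle is when $\psi$ also has a boundary contact point. I would split into two subcases. If $\psi(\zeta) = \p(\zeta) = \eta$ as well — i.e. $\p$ and $\psi$ share the boundary data at $\zeta$ — then I would take $z_n = r_n \zeta$ with $r_n \uparrow 1$ and show that the pseudohyperbolic distance $\rho(\p(z_n), \psi(z_n))$ does not go to zero fast enough to kill the blow-up, unless $\p = \psi$. Here the key computation: writing $\p(z) - \eta$ and $\psi(z) - \eta$ as linear-fractional expressions, one sees $1 - |\p(z_n)|^2 \asymp (1-|z_n|^2)$ and similarly for $\psi$, so the parenthetical factor in~(\ref{phexpress}) stays bounded below; meanwhile $\rho(\p(z_n),\psi(z_n))$ can be estimated via~(\ref{phdisksimp}), and because $\p$ and $\psi$ are distinct linear-fractional maps agreeing at $\zeta$, the difference $\p(z) - \psi(z)$ is comparable to $(1-z/\zeta)$ times a nonzero constant — the same order as $1 - |\p(z_n)|$ — forcing $\rho$ to stay bounded away from $0$. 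The other subcase is when $\psi$ does not send $\zeta$ to the boundary, either because $\|\psi\|_\infty < 1$ or because $\psi$'s boundary contact point $\zeta' \neq \zeta$; then along $z_n = r_n\zeta$ we get $1 - |\psi(z_n)|^2$ bounded below, $\p(z_n) \to \eta \in \partial\D$ so $\rho(\p(z_n),\psi(z_n)) \to 1$, and the factor $(1-|z_n|^2)/(1-|\p(z_n)|^2)$ is bounded below — so the product~(\ref{phexpress}) does not vanish. In every case Theorem~\ref{phdist} yields the contradiction.

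I expect the main obstacle to be organizing the case analysis cleanly and, within the shared-boundary-point subcase, pinning down the precise asymptotic comparison of $\p(z) - \psi(z)$ to $1 - z/\zeta$ in a way that is uniform enough to conclude $\rho$ stays away from zero. A clean device here is to normalize by composing with disk automorphisms so that $\zeta = 1$ and $\eta = 1$, reducing $\p$ and $\psi$ to maps of the form $z \mapsto 1 - \frac{a(1-z)}{1 + b(1-z)}$ for suitable constants with $\mathrm{Re}\,a > 0$ (this is the general form of a linear-fractional self-map of $\D$ fixing $1$ with $1$ in its range on the boundary); then $\p(z) - \psi(z)$ is visibly a rational multiple of $(1-z)$ of the correct order, and the estimates become bookkeeping. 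The reason this works in one variable — and the reason the paper flags that the several-variable analogue is subtler — is that here the boundary contact set is a single point and the angular derivative machinery is completely explicit, so there is no room for the inducing maps to be "asymptotically close" along a tangential sequence without actually being equal.
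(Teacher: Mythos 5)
There is a genuine gap, and it sits exactly where the real difficulty of the theorem lies. Your case analysis correctly disposes of the situations where $\psi(\zeta)\neq\varphi(\zeta)$ or $|\psi(\zeta)|<1$, but in the shared-boundary-data subcase your key claim --- that for distinct linear-fractional $\varphi,\psi$ agreeing at $\zeta$ the difference $\varphi(z)-\psi(z)$ is comparable to a nonzero constant times $(1-z/\zeta)$ --- is false whenever the angular derivatives also agree, i.e.\ $\varphi'(\zeta)=\psi'(\zeta)$. In that case $\varphi(z)-\psi(z)=O((z-\zeta)^2)$, and this case genuinely occurs: with $\zeta=1$, take $\psi(z)=(1+z)/2$ and $\varphi(z)=2/(3-z)$; both are linear-fractional self-maps of $\D$ fixing $1$ with derivative $1/2$ there, they are distinct, and $\varphi(z)-\psi(z)=(1-z)^2/(2(3-z))$. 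Along your radial sequence $z_n=r_n$ the denominator $1-\overline{\varphi(z_n)}\psi(z_n)$ is of order $1-r_n$ while the numerator is of order $(1-r_n)^2$, so $\rho(\varphi(z_n),\psi(z_n))\to 0$ like $1-r_n$; since the parenthetical factor in~(\ref{phexpress}) stays bounded (Julia--Carath\'eodory), the whole expression~(\ref{phexpress}) tends to zero and Theorem~\ref{phdist} yields no information. Your own normal form exposes the problem: two maps $z\mapsto 1-a(1-z)/(1+b(1-z))$ with the same $a$ but different $b$ differ only at order $(1-z)^2$. This is not a bookkeeping issue; no radial (or even nontangential) sequence can work here.

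The paper isolates precisely this hard subcase as Theorem~\ref{Dsamefirst} and resolves it with a tangential approach: writing $\varphi=\psi\circ\tau_1=\tau_2\circ\psi$ where $\tau_1=\psi^{-1}\circ\varphi$ is a nontrivial parabolic map (conjugate via the Cayley transform to a half-plane translation $w\mapsto w+b$, $b\neq 0$), one chooses $z_n\to 1$ along a horocycle $E_{k'}$ so that $\psi(z_n)$ runs along the horocycle $E_k$, i.e.\ along the horizontal line $\{\operatorname{Im}w=k\}$ upstairs. There the translation $\tau_2$ moves points a \emph{constant} pseudohyperbolic distance $|c/(2ik+c)|>0$, while $(1-|z_n|^2)/(1-|\psi(z_n)|^2)$ still has a positive limit, so~(\ref{phexpress}) stays bounded below. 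Your closing remark that there is ``no room for the maps to be asymptotically close along a tangential sequence without being equal'' is the right intuition, but your argument never uses a tangential sequence, and the first-order-agreement case (which the easier essential-norm estimate of Theorem 9.16 of \cite{cmbook} reduces everything to) is exactly the one your radial sequences cannot detect.
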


The key step in our proof of Theorem~\ref{mainforD} is contained in the following
result.

\begin{thm}\label{Dsamefirst}
Suppose $\varphi$ and $\psi$ are non-automorphism linear-fractional self-maps of $\D$ with
$\varphi(\zeta)=\psi(\zeta)\in\partial\D$ and $\varphi'(\zeta)=\psi'(\zeta)$
for some $\zeta\in\partial \D$.
If  $\varphi\neq \psi$ then $C_{\varphi}-C_{\psi}$ is not compact on ${\mathcal H}$.
\end{thm}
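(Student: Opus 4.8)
The plan is to use Theorem~\ref{phdist}: it suffices to produce a sequence $z_n \to \partial\D$ along which the quantity in~(\ref{phexpress}) stays bounded away from zero. Since $\varphi$ and $\psi$ are non-automorphism linear-fractional self-maps agreeing to first order at the boundary point $\zeta$ where $\varphi(\zeta)=\psi(\zeta)=\eta\in\partial\D$, I would choose $z_n$ approaching $\zeta$ along the radius, i.e.\ $z_n = r_n\zeta$ with $r_n \to 1^-$. The natural coordinates here come from the half-plane model: conjugating by the Cayley transform sending $\D$ to the right half-plane $\{\mathrm{Re}\,w>0\}$ with $\zeta\mapsto\infty$ on the domain side and $\eta\mapsto\infty$ on the range side, both $\varphi$ and $\psi$ become affine maps $w\mapsto a w + b$ with $\mathrm{Re}\,a > 0$ (the non-automorphism hypothesis forces the behavior, and $\varphi(\zeta)=\eta\in\partial\D$ corresponds to the point at $\infty$ being fixed). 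The condition $\varphi'(\zeta)=\psi'(\zeta)$ translates into the two affine maps having the same linear coefficient $a$, so that in half-plane coordinates $\varphi$ and $\psi$ differ only by an additive constant $b_\varphi - b_\psi =: c \neq 0$ (this $c$ being nonzero is exactly $\varphi\neq\psi$).

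With this reduction, the key estimates become transparent. Writing $w_n$ for the image of $z_n=r_n\zeta$ in the half-plane, we have $w_n \to \infty$ with $\mathrm{Re}\,w_n$ comparable to $1-r_n$ (radial approach), and $\varphi(z_n), \psi(z_n)$ correspond to $a w_n + b_\varphi$ and $a w_n + b_\psi$. The quantities $1-|\varphi(z_n)|^2$ and $1-|\psi(z_n)|^2$ are each comparable (up to a fixed constant depending on the Cayley transforms) to $\mathrm{Re}(a w_n)/|aw_n|^2 \sim \mathrm{Re}(aw_n)/|w_n|^2$, while $1-|z_n|^2 \sim \mathrm{Re}\,w_n/|w_n|^2$. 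Hence each ratio $\frac{1-|z_n|^2}{1-|\varphi(z_n)|^2}$ tends to $\mathrm{Re}\,w_n / \mathrm{Re}(aw_n)$, and since $w_n$ runs along a fixed ray to $\infty$ and $\mathrm{Re}\,a>0$, this has a positive finite limit; in particular the sum of the two ratios in~(\ref{phexpress}) is bounded below by a positive constant for large $n$. For the pseudohyperbolic factor, I would use~(\ref{defofrho}): a direct half-plane computation shows $1 - \rho(\varphi(z_n),\psi(z_n))^2$ is comparable to $\frac{\mathrm{Re}(aw_n)^2}{|aw_n - \overline{aw_n} - \bar c|^2 \,/\, \text{(bounded)}}$ — more carefully, since $\varphi(z_n)-\psi(z_n)$ corresponds to the \emph{fixed} nonzero displacement $c$ while $1-\langle\varphi(z_n),\psi(z_n)\rangle$ blows up like $|w_n|$, one gets that $\rho(\varphi(z_n),\psi(z_n))$ does \emph{not} tend to zero. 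Combining, the product in~(\ref{phexpress}) is bounded away from zero, so $C_\varphi - C_\psi$ is not compact.

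The main obstacle I anticipate is the bookkeeping in the Cayley-transform reduction and making the comparability constants in the estimates for $1-|\varphi(z_n)|^2$ etc.\ genuinely uniform — one must be careful that the linear-fractional map $\varphi$, written in half-plane coordinates adapted to \emph{both} $\zeta$ (domain) and $\eta=\varphi(\zeta)$ (range), really does fix $\infty$ and is affine; this uses that $\varphi$ is a non-automorphism self-map carrying the boundary point $\zeta$ to the boundary point $\eta$, which pins down the relevant Taylor data at $\zeta$ (the Julia–Carathéodory / angular-derivative behavior). A secondary technical point is checking that $\varphi$ and $\psi$ share the \emph{same} half-plane domain coordinate (legitimate, as that coordinate depends only on $\zeta$) and the same range coordinate (legitimate, as $\varphi(\zeta)=\psi(\zeta)=\eta$), so that $\varphi'(\zeta)=\psi'(\zeta)$ really does collapse to equality of the linear coefficients $a$. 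Once these normalizations are in place, the remaining inequalities are routine.
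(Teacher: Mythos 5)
Your half-plane reduction is essentially the same as the paper's: after conjugating so that $\zeta$ and $\eta=\varphi(\zeta)$ both go to $\infty$, the maps become $\Phi(w)=aw+b_\varphi$ and $\Psi(w)=aw+b_\psi$ with the same coefficient $a$ (that is the content of $\varphi'(\zeta)=\psi'(\zeta)$), so they differ by a fixed translation $c=b_\varphi-b_\psi\neq 0$; this is exactly the paper's factorization $\varphi=\tau_2\circ\psi$ with $\tau_2$ conjugate to translation by $c$. The genuine gap is the choice of test sequence. Along the radial sequence $z_n=r_n\zeta$ the pseudohyperbolic factor tends to \emph{zero}, not to a positive limit: in the upper half-plane $\rho_{\mathbb H}(u,v)=|u-v|/|u-\overline{v}|$, and with $u=\Phi(w_n)$, $v=\Psi(w_n)$ one has $|u-v|=|c|$ fixed while $|u-\overline{v}|=|2i\,\mathrm{Im}(aw_n)+b_\varphi-\overline{b_\psi}|\rightarrow\infty$, because radial (indeed any nontangential) approach to $\zeta$ corresponds to $\mathrm{Im}\,w_n\rightarrow\infty$. (Equivalently, $|u-\overline{v}|^2-|u-v|^2=4\,\mathrm{Im}\,u\,\mathrm{Im}\,v$, so $1-\rho^2\rightarrow 1$.) Your stated reason that $\rho$ does not tend to zero --- that the denominator blows up --- has the logic reversed: the blow-up of the denominator is precisely what forces $\rho\rightarrow 0$. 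Since, as you correctly observe, the two ratios in~(\ref{phexpress}) stay bounded (they tend to $1/|\varphi'(\zeta)|$ and $1/|\psi'(\zeta)|$ by Julia--Carath\'eodory), the whole expression~(\ref{phexpress}) tends to zero along radii and Theorem~\ref{phdist} yields nothing. This is not a repairable bookkeeping issue: the hypothesis is exactly that $\varphi$ and $\psi$ agree to first order at $\zeta$, so no nontangential sequence can detect the difference.

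The missing idea is to approach $\zeta$ \emph{tangentially}, along a horocycle: take $w_n\rightarrow\infty$ with $\mathrm{Im}\,w_n\equiv k$ fixed, i.e.\ $z_n$ tending to $\zeta$ along an internally tangent circle $E_{k'}$. Then $\rho_{\mathbb H}(w_n+c,\,w_n)=|c|/|2ik+c|$ is a positive constant independent of $n$, while the second factor in~(\ref{phexpress}) still has a positive limit because a linear-fractional map fixing $1$ carries horocycles at $1$ to horocycles at $1$, giving $(1-|z_n|^2)/(1-|\psi(z_n)|^2)\rightarrow k'/(k|\psi'(1)|^2)$. This is precisely the paper's construction (it parametrizes the sequence by first placing $\psi(z_n)$ on $E_k$ and pulling back). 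With that one change your argument goes through; as written, it does not.
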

\begin{proof}
By pre- and post- composing with rotations, we may assume
without loss of generality that $\zeta =1$ and $\varphi(\zeta)=\psi(\zeta)=1$.
Since $\varphi$ and $\psi$ are linear-fractional, we may also assume without loss
of generality that $\varphi(\D)\subseteq\psi(\D)$, so that $\tau_1\equiv\psi^{-1}\circ\varphi$
is a well-defined linear-fractional self-map of $\D$.  Note that $\tau_1(1)=1$ and $\tau_1'(1)=1$.
Since $\varphi\neq \psi$, $\tau_1$ is not the identity.  Thus $\tau_1$ is conjugate
via the Cayley transform
$$C(z)=i\frac{1+z}{1-z}$$
to a translation $w\rightarrow w+b$ of the upper half-plane ${\mathbb H}=\{w:\mbox{Im }w>0\}$ for some
$b\neq 0$ with $\mbox{Im }b\geq 0$.  
Moreover, it is easy to see that 
$$\psi\circ\tau_1=\tau_2\circ\psi$$ for some
linear-fractional $\tau_2$ which is also conjugate to a translation in the upper half-plane.
Specifically, if $\tau_1$ is conjugate to translation by $b$, then $\tau_2$
is conjugate to translation by $c=b/|\psi'(1)|$; see Lemma 5 in \cite{kmm}.  
Since $b\neq 0$, so also $c\neq 0$.

For any positive number $k$, the line
$\{\mbox{Im }w=k\}$ corresponds under the Cayley transform 
to $E_k\equiv\{z:|1-z|^2=\frac{1}{k}(1-|z|^2)\}$, which is an
internally tangent circle
in $\D$ passing through $1$.  The radius of this circle is equal to $(k+1)^{-1}$.
By choosing $k$ sufficiently large, this circle will be contained in $\psi(\D)\cup \{1\}$.
Fix such a $k$ and choose points $w_n$ on $\{\mbox{Im }w=k\}$ with $w_n\rightarrow\infty$.
The corresponding points $v_n=C^{-1}(w_n)$ in the disk tend to $1$ along the circle
$E_k$, and each $v_n$ is the image under
$\psi$ of some $z_n$ belonging to
the internally tangent circle $\psi^{-1}(E_k)=E_{k'}$.
Notice that $z_n\rightarrow 1$ as $n\rightarrow\infty$.

Next we compute the pseudohyperbolic distance between $\varphi(z_n)$ and $\psi(z_n)$.
To simplify the computations, we define the pseudohyperbolic distance
in the upper half-plane ${\mathbb H}$ by 
$$\rho_{\mathbb H}(u,v)=\rho_{\D}(C^{-1}u,C^{-1}v)$$
for $u$ and $v$ in ${\mathbb H}$.  Using this definition and ~(\ref{phdisksimp}) it is straightforward
to see that
$$\rho_{\mathbb H}(u,v)=\left|\frac{u-v}{u-\overline{v}}\right|.$$
Since $\varphi=\psi\circ\tau_1=\tau_2\circ\psi$,
\begin{eqnarray*}
\rho_{\D}(\varphi(z_n),\psi(z_n))&=&\rho_{\D}(\tau_2(\psi(z_n)),\psi(z_n))=\rho_{\D}(\tau_2(v_n),v_n)\\
&=&\rho_{{\mathbb H}}(C(\tau_2(v_n)),C(v_n))=\rho_{{\mathbb H}}(C(\tau_2(C^{-1}(w_n)),CC^{-1}(w_n))\\&=&
\rho_{{\mathbb H}}(w_n+c,w_n)\\&=&\left|\frac{c}{2i\mbox{Im }w_n+c}\right|=\left|\frac{c}{2ik+c}\right|.
\end{eqnarray*}
Thus for all $n$, the pseudohyperbolic distance between $\varphi(z_n)$ and $\psi(z_n)$ is
a positive constant.

Turning to the second factor in Equation~(\ref{phexpress}), we have
$$\frac{1-|z_n|^2}{1-|\psi(z_n)|^2}=\frac{k'(|1-z_n|^2)}{k(|1-\psi(z_n)|^2)}$$
by the geometry of the sequence $\{z_n\}$ already noted.
Thus since $\psi$ is differentiable at $1$ with $\psi'(1)\neq 0$ and $\psi(1)=1$, 
$$\lim_{n\rightarrow\infty}\frac{1-|z_n|^2}{1-|\psi(z_n)|^2}=\frac{k'}{k|\psi'(1)|^2}\neq 0.$$
Thus we have shown that
$$\rho(\varphi(z_n),\psi(z_n))\left(\frac{1-|z_n|^2}{1-|\psi(z_n)|^2}\right)$$
has a positive limit as $n\rightarrow 1$ (where $z_n\rightarrow 1$). 
Theorem~\ref{phdist} guarantees that $C_{\varphi}-C_{\psi}$ is not compact on ${\mathcal H}$.
\end{proof}

\begin{proof}[Proof of Theorem~\ref{mainforD}.]

If either $\|\varphi\|_{\infty}<1$ or $\|\psi\|_{\infty}<1$, then the compactness
of the difference $C_{\varphi}-C_{\psi}$ implies the compactness of each operator individually.
Thus we may assume $\|\varphi\|_{\infty}=\|\psi\|_{\infty}=1$.  Suppose $\varphi(\zeta)=\eta$
where $\zeta,\eta$ are in $\partial \D$.  Since $\varphi$ and $\psi$ are linear fractional,
both $\varphi'(\zeta)$ and $\psi'(\zeta)$ exist and are non-zero.  If either $\varphi(\zeta)\neq \psi(\zeta)$
or $\varphi'(\zeta)\neq \psi'(\zeta)$, then by Theorem 9.16 of \cite{cmbook} the essential norm
of $C_{\varphi}-C_{\psi}$ satisfies
$$\|C_{\varphi}-C_{\psi}\|_e^2\geq |\varphi'(\zeta)|^{\beta}$$
for some positive number $\beta$ depending on the particular choice of ${\mathcal H}$ in question;
when ${\mathcal H}=H^2(\D)$ we may take $\beta=1$, and when ${\mathcal H}=A^2_{\alpha}(\D)$, $\beta=\alpha+2$.
This gives a positive lower bound on the essential norm of the difference,
so that if the difference is compact we must have $\varphi(\zeta)=\psi(\zeta)$ and
$\varphi'(\zeta)=\psi'(\zeta)$.  Note that this argument also shows that if
$C_{\varphi}-C_{\psi}$ is compact but non-zero, neither $\varphi$ nor
$\psi$ can be an automorphism. An appeal 
to Theorem~\ref{Dsamefirst} finishes the proof.

\end{proof}

\section{Results in several variables}

In this section 
${\mathcal H}$ will denote either the Hardy space $H^2(B_N)$ or a weighted Bergman space $A^2_{\alpha}(B_N)$,
where $B_N$ is 
the ball 
$$\{(z_1,z_2,\cdots,z_N)\in {\mathbb C^N}:\sum_{j=1}^N|z_j|^2<1\}$$
in ${\mathbb C}^N$, $N>1$.
These Hilbert spaces are defined by
$$H^2(B_N)=
\{f\mbox{ analytic on }B_N:\|f\|^2\equiv
\sup_{0<r<1}\int_{\partial B_N}|f(r\zeta)|^2d\sigma(\zeta)<\infty\},
$$
where $\sigma$ is normalized Lebesgue surface area measure on $\partial B_N$,
and for $\alpha>-1$,
$$A^2_{\alpha}(B_N)=\{f\mbox{ analytic on }B_N:\|f\|_{\alpha}^2\equiv
\int_{B_N}|f(z)|^2w_{\alpha}(z)d\nu(z)<\infty\},$$
where 
$$w_{\alpha}(z)=\frac{\Gamma(N+\alpha+1)}{\Gamma(N+1)\Gamma(\alpha+1)}(1-|z|^2)^{\alpha},$$
and $\nu$
is normalized Lebesgue volume measure on $B_N$.
In $H^2(B_N)$ the reproducing kernel for the bounded linear
functional of evaluation at $w\in B_N$ is
\begin{equation}\label{H2ker}
K_w(z)=\frac{1}{(1-\langle z,w\rangle)^N}
\end{equation}
which has norm $(1-|w|^2)^{N/2}$.
The reproducing kernel for evaluation at $w$
in $A^2_{\alpha}(B_N)$ is 
\begin{equation}\label{A2ker}
K_w(z)=\frac{1}{(1-\langle z, w\rangle)^{N+1+\alpha}}.
\end{equation}
 
By a linear-fractional map of $B_N$ we mean a map that is 
analytic in $B_N$ and of the form
\begin{equation}\label{defoflinfrac}
\varphi(z)=\frac{Az+B}{\langle z,C\rangle +d},
\end{equation}
where $A$ is an $N\times N$ matrix, $B$ and $C$ are $N\times 1$ column
vectors, $d$ is a complex scalar, and $\langle \cdot,\cdot \rangle$ is
the usual inner product on ${\mathbb C}^N$.  If $\varphi$ maps
$B_N$ into itself, then necessarily
$|d|>|C|$, so that in particular $d\neq 0$, and $\varphi$ is
analytic in a neighborhood of $\overline{B_N}$.  When
$\varphi$ is a linear-fractional self-map of $B_N$, $C_{\varphi}$ is
bounded on $H^2(B_N)$ and $A^2_{\alpha}(B_N)$ for
$\alpha>-1$ (\cite{cowenmac}).  Moreover, every automorphism of
$B_N$ is linear-fractional (\cite{Ru}, Theorem 2.2.5). 
An important geometric property of linear-fractional maps is that
they take affine sets into affine sets (\cite{cowenmac}, Theorem 7).  By an affine set in ${\mathbb C}^N$ we
mean the translate of a complex subspace; the dimension of the affine set is
the dimension of the subspace.  An ``affine subset of $B_N$'' is the intersection 
of $B_N$ with an affine set in ${\mathbb C}^N$.

The goal of this section
is to obtain the following result.

\begin{thm}\label{scvmain}
If $\varphi$ and $\psi$ are linear-fractional self maps of $B_N$ with
$C_{\varphi}-C_{\psi}$ compact on ${\mathcal H}$, then either
$\varphi=\psi$, or $\|\varphi\|_{\infty}$ and $\|\psi\|_{\infty}$ are both
strictly less than $1$, so that $C_{\varphi}$ and $C_{\psi}$ are compact
on ${\mathcal H}$.
\end{thm}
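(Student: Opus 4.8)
The strategy is to reproduce the structure of the one-variable argument, with Theorem~\ref{phdist} again supplying the non-compactness at the end. First, if $\min(\|\varphi\|_\infty,\|\psi\|_\infty)<1$ then compactness of $C_\varphi-C_\psi$ forces each of $C_\varphi$, $C_\psi$ to be compact, which for linear-fractional symbols means $\|\varphi\|_\infty<1$ and $\|\psi\|_\infty<1$, and we are done. So assume $\|\varphi\|_\infty=\|\psi\|_\infty=1$. Since a linear-fractional self-map of $B_N$ is analytic on a neighborhood of $\overline{B_N}$, the maximum principle produces a boundary point $\zeta$ with $\varphi(\zeta)\in\partial B_N$, and similarly for $\psi$. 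Invoking a Julia--Carath\'eodory based lower bound for $\|C_\varphi-C_\psi\|_e$ on ${\mathcal H}$ --- the ball analogue of Theorem 9.16 of \cite{cmbook} (compare \cite{mac-weir}) --- one shows that if the difference is compact then at every $\zeta\in\partial B_N$ with $\varphi(\zeta)\in\partial B_N$ one must have $\psi(\zeta)=\varphi(\zeta)$, with the boundary dilation coefficients and the relevant first-order boundary data of $\varphi$ and $\psi$ agreeing at $\zeta$. Applied at all boundary points, this also disposes of the automorphism case: if, say, $\varphi$ is an automorphism then $\varphi(\zeta)\in\partial B_N$ for every $\zeta\in\partial B_N$, so $\psi(\zeta)=\varphi(\zeta)$ on all of $\partial B_N$, hence $\varphi=\psi$. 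Thus we may assume $\varphi$ and $\psi$ are non-automorphism linear-fractional self-maps of $B_N$ of sup-norm $1$ that share boundary value and first-order data at every boundary point they carry to $\partial B_N$.

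The heart of the matter is then a several-variable version of Theorem~\ref{Dsamefirst}: if $\varphi\ne\psi$ are non-automorphism linear-fractional self-maps of $B_N$ with $\varphi(\zeta)=\psi(\zeta)\in\partial B_N$ and matching first-order data at $\zeta$, then $C_\varphi-C_\psi$ is not compact. Pre- and post-composing with automorphisms of $B_N$ conjugates $C_\varphi-C_\psi$ by bounded invertible operators and preserves linear-fractionality, sup-norm $1$, the condition $\varphi\ne\psi$, and all of the above; so we may take $\zeta=e_1=(1,0,\dots,0)$ and $\varphi(e_1)=\psi(e_1)=e_1$. Transferring to the Siegel upper half-space via the Cayley transform, with $e_1$ corresponding to the point at infinity, a non-automorphism linear-fractional self-map of $B_N$ fixing $e_1$ becomes an explicit affine self-map of the Siegel domain; hence $\varphi$ and $\psi$ become affine maps $\Phi,\Psi$ agreeing to first order at infinity, so that $\Phi-\Psi$ is a nonzero constant vector in ${\mathbb C}^N$ (the analogue of the translation $\omega\mapsto\omega+c$ in the proof of Theorem~\ref{Dsamefirst}, now with possibly nonzero normal and tangential components). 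It then suffices, by Theorem~\ref{phdist}, to exhibit a sequence $z_n\to e_1$ along which the quantity~(\ref{phexpress}) stays bounded away from zero.

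Such a sequence is built by imitating the one-variable construction. Because $\varphi$ and $\psi$ agree to first order at $e_1$, a sufficiently small internally tangent ``horoball'' $E$ at $e_1$ satisfies $\varphi(E),\psi(E)\subseteq B_N$; one then chooses points $z_n$ tending to $e_1$ inside $E$, taken along directions in which $\varphi$ and $\psi$ genuinely differ (possible since $\varphi\ne\psi$). Since $\psi$ has a finite nonzero boundary dilation coefficient at $e_1$ --- the analogue of $\psi'(1)\ne0$ --- the limit $\lim_n(1-|z_n|^2)/(1-|\psi(z_n)|^2)$ exists and is positive for $z_n$ approaching $e_1$ suitably within $E$, exactly as in the proof of Theorem~\ref{Dsamefirst}; and since $\Phi-\Psi$ is a nonzero constant, a direct computation of $\rho(\varphi(z_n),\psi(z_n))$ --- using the Siegel-domain form of the pseudohyperbolic metric, the analogue of $\rho_{\mathbb H}(u,v)=|u-v|/|u-\overline{v}|$ --- shows it is bounded below along the sequence. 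An appeal to Theorem~\ref{phdist} then finishes the proof.

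I expect the main obstacles to be exactly the phenomena with no one-variable counterpart. First, $\psi$ need not be injective, so the tidy reduction $\tau_1=\psi^{-1}\circ\varphi$, $\varphi=\tau_2\circ\psi$ from the proof of Theorem~\ref{Dsamefirst} is unavailable; one must track $\varphi(z_n)$ and $\psi(z_n)$ separately and estimate the pseudohyperbolic distance between them directly from the affine Siegel models. Second, $\varphi$ and $\psi$ may agree on a positive-dimensional affine subset of $B_N$ whose closure contains $e_1$ while differing off it, and the constant $\Phi-\Psi$ may be purely tangential; so one must control which affine subsets of $B_N$ get mapped where, and choose the horoball $E$ and the approach directions inside it so that the restricted behavior of $\varphi$ and $\psi$ still obeys the Julia--Carath\'eodory estimates needed to keep the factors $1-|\psi(z_n)|^2$ and $1-|\varphi(z_n)|^2$ under control. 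Carrying out this bookkeeping, and verifying that matched first-order data really does pin down the denominators, is the technical core of the argument, and is where the geometric viewpoint of Section~2 pays off.
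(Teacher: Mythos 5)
Your overall scaffolding (reduce to $\|\varphi\|_\infty=\|\psi\|_\infty=1$, apply a Julia--Carath\'eodory essential-norm bound to match boundary values and first-order data, then exhibit a sequence violating Theorem~\ref{phdist}) matches the paper's, but the core of your argument has a genuine gap. You assert that once $\varphi(e_1)=\psi(e_1)=e_1$ with matching first-order data, the Cayley-transformed maps $\Phi,\Psi$ differ by a nonzero constant vector. This is false in general: the Siegel-domain model of such a map has the form $\Phi(w_1,w')=(w_1+2i\langle w',\delta\rangle+b,\;Aw'+\gamma)$ (and this only after the normal dilation coefficient has been normalized to $1$), so $\Phi-\Psi$ depends on $w'$ whenever $\delta_1\neq\delta_2$ or $A_1\neq A_2$ --- data that the matching of $D_1\varphi_1(e_1)$ with $D_1\psi_1(e_1)$ does not control. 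More fundamentally, you have no replacement for the one-variable move $\tau_1=\psi^{-1}\circ\varphi$, which is what forces the dilation coefficient to equal $1$ and makes the model a translation. The paper's substitute is the central new idea you are missing: compose on the right with the Krein adjoint $\sigma_\varphi$ of $\varphi$, so that $\tau=\varphi\circ\sigma_\varphi$ and $\xi=\psi\circ\sigma_\varphi$ satisfy $D_1\tau_1(e_1)=1=D_1\xi_1(e_1)$. Only then does the parabolic analysis (the paper's Theorem~\ref{parabolics}, which fixes the slice $w'=c$ precisely so that the difference of the images \emph{is} constant along the chosen sequence) apply. Without this normalization your ``direct computation'' of $\rho(\varphi(z_n),\psi(z_n))$ does not go through for hyperbolic-type maps.

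Two further cases that your sketch does not address are genuinely needed. First, even with $D_1\tau_1(e_1)=1$, the map $\tau$ need not be parabolic: it may fix an interior point, in which case it restricts to the identity on a complex line and has the form $(z_1,z')\mapsto(z_1,Az')$ (the paper's Proposition~\ref{niceform2}); forcing $A=M$ then requires the explicit pseudohyperbolic limit computation~(\ref{lhopital}) along the paths~(\ref{path}) together with a Cauchy--Schwarz equality argument --- a phenomenon with no one-variable counterpart. Second, the conclusion $\varphi\circ\sigma_\varphi=\psi\circ\sigma_\varphi$ only yields $\varphi=\psi$ on the range of $\sigma_\varphi$, which is open only when $\varphi$ is univalent. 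For non-univalent $\varphi$ the paper must show the ranges of $\varphi$ and $\psi$ span the same $K$-dimensional affine set (via a rank computation with $\sigma_\tau\circ L^*L\circ\tau$) and then run an induction on dimension using the restriction and extension operators of Lemma~\ref{restrictextend}, which identify the induced space on $B_K$ as $A^2_{N-K+\gamma}(B_K)$. Your proposal, even if the parabolic step were repaired, would at best cover the univalent case.
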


The key geometric arguments of the last section can be
extended to several variables.  The Cayley upper half space
${\mathbb H}_N$ is defined by
$${\mathbb H}_N=\{(w_1,w'):\mbox{Im }w_1>|w'|^2\}.$$
where $w'=(w_2,..., w_N)$ and $|w'|^2=|w_2|^2+\cdots+|w_N|^2$.
Its boundary is of course
$$\{(w_1,w'):\mbox{Im }w_1=|w'|^2\}.$$
Let $e_1=(1,0,\cdots,0)=(1,0')$.
The (linear-fractional) Cayley transform
$$C(z)=i\frac{e_1+z}{1-z_1}$$
is a biholomorphic map of the ball $B_N$ onto ${\mathbb H}_N$; its inverse
is
$$C^{-1}(w)=\left(\frac{w_1-i}{w_1+i},\frac{2w'}{w_1+i}\right).$$
If $b=(b_1,b')\in\mathbb{C}^N$, an ${\mathbb H}$-translation is a map of the form
\begin{equation}\label{h-translations}
h_b(w_1,w')=(w_1+2i\langle w',b'\rangle+b_1,w'+b').
\end{equation}
If $\mbox{Im }b_1\geq |b'|^2$ it maps  ${\mathbb H}_N$ into itself.
It is an automorphism of ${\mathbb H}_N$ if $\mbox{Im }b_1= |b'|^2$.
The following two facts, which generalize results we used in the
previous section, are easily checked: 
\begin{itemize}
\item An ${\mathbb H}$-translation $h_b$  maps
the set $\Gamma_k\equiv\{(w_1,w'):\mbox{Im }w_1-|w'|^2=k\}$ into the corresponding
set $\{(w_1,w'):\mbox{Im }w_1-|w'|^2=\tilde{k}\}$
where 
$\tilde{k}=k+\mbox{Im }b_1-|b'|^2$.
\item For any $k> 0$, 
$$C^{-1}(\Gamma_k)= E(k,e_1)\equiv\{z\in B_N:|1-z_1|^2= \frac{1}{k}(1-|z|^2)\}.$$
\end{itemize}
The set $E(k,e_1)$ is an internally tangent ellipsoid at $e_1=(1,0')$;
a computation shows that $E(k,e_1)$ consists of the points $(z_1,z')$ satisfying
$$\left|z_1-\frac{k}{1+k}\right|^2+\frac{1}{1+k}|z'|^2=\left(\frac{1}{1+k}\right)^2.$$
In particular, for $t$ real, points of the form $(t+i(k+|w'|^2), w')$ in $\Gamma_k$ pull back under
$C^{-1}$ to points on the ellipsoid $E(k,e_1)$.  For fixed $w'$, these pull-back
points tend to $e_1$ as $t\rightarrow\infty$, and for fixed $t$, they tend to $e_1$
as $|w'|\rightarrow\infty$.

Recall that the pseudohyperbolic metric $\rho_{B_N}(\cdot,\cdot)$ on $B_N$ is defined by
$$1-\rho_{B_N}(z,w)^2=\frac{(1-|z|^2)(1-|w|^2)}{|1-\langle z,w\rangle|^2}.$$
For points $v, u$ in ${\mathbb H}_N$, write $\rho_{{\mathbb H}}(v,u)$ for 
$\rho_{B_N}(C^{-1}v,C^{-1}u)$;  
we will call this the pseudohyperbolic metric on ${\mathbb H}_N$.
Since the pseudohyperbolic
metric $\rho_{B_N}$ is easily seen to be automorphism invariant, it follows that
for any automorphism $\Lambda$ of ${\mathbb H}_N$,
$$\rho_{{\mathbb H}}(\Lambda v,\Lambda u)=\rho_{{\mathbb H}}(v,u).$$
In the next theorem, we will use this observation  
with $\Lambda$
an automorphic ${\mathbb H}$-translation.

By a parabolic linear-fractional
map in $B_N$ fixing $e_1$ we mean a linear-fractional map $\tau$ of $B_N$ into $B_N$ with
$\tau(e_1)=e_1$ and $D_1\tau_1(e_1)=1$, but fixing no other point
in $\overline{B_N}$.
By \cite{braccietal} any parabolic linear-fractional self-map $\varphi$ of $B_N$ that fixes $e_1$
is conjugate to a self-map of ${\mathbb H}_N$ of the
form
$$\Phi(w_1,w')=(w_1+2i\langle w', \delta\rangle +b,Aw'+\gamma)$$
(where $\delta$ and $\gamma$ are in ${\mathbb C}^{N-1}$, $b\in {\mathbb C}$,
and certain conditions hold, including 
$|A|\leq 1$).  Note that the ${\mathbb H}$-translations of Equation~(\ref{h-translations})
are a special case of this family of maps.
Conjugating $\Phi$ by the Cayley transform $C$ we see that the first coordinate function
of $C^{-1}\Phi C$ is
\begin{equation}\label{firstcoordpara}
\frac{(2i-b)z_1-2\langle z',\delta\rangle +b}
{-bz_1-2\langle z',\delta\rangle+2i+b}.
\end{equation}
We will need this explicit formula in the proof of the next result.

\begin{thm}\label{parabolics}
Suppose $\varphi$ and $\psi$ are parabolic linear-fractional self-maps of
the ball fixing $e_1$, so that 
$$D_1\varphi_1(e_1)=1=D_1\psi_1(e_1).$$  If $\varphi\neq\psi$, then
$C_{\varphi}-C_{\psi}$ is not compact on ${\mathcal H}$.
\end{thm}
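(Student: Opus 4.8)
The plan is to mirror the one-variable argument from Theorem~\ref{Dsamefirst} as closely as possible, using Theorem~\ref{phdist} as the obstruction-to-compactness engine and the Cayley transform to move everything to $\mathbb{H}_N$ where $\varphi$ and $\psi$ become the explicit maps $\Phi,\Psi$ described before the statement. First I would conjugate: write $\Phi = C\varphi C^{-1}$ and $\Psi = C\psi C^{-1}$, so that
$$\Phi(w_1,w') = (w_1 + 2i\langle w',\delta_1\rangle + b_1,\, A_1 w' + \gamma_1),\qquad
\Psi(w_1,w') = (w_1 + 2i\langle w',\delta_2\rangle + b_2,\, A_2 w' + \gamma_2),$$
with the constraints $|A_j|\le 1$ etc. The goal is to produce a sequence $w_n \in \mathbb{H}_N$ tending to $\infty$ (so that $v_n \equiv C^{-1}(w_n)$ tends to $\partial B_N$) along which the $\mathbb{H}_N$-version of the quantity in~(\ref{phexpress}) stays bounded away from zero. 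Since the second factor $\frac{1-|v_n|^2}{1-|\varphi(v_n)|^2}$ (plus the analogous $\psi$ term) is controlled by staying on a fixed set $\Gamma_k$ — exactly as in the disk, where we used the internally tangent ellipsoid $E(k,e_1)$ and the fact that $\varphi,\psi$ are differentiable at $e_1$ with nonvanishing derivative — the heart of the matter is to show $\rho_{\mathbb{H}}(\Phi(w_n),\Psi(w_n))$ does not tend to $0$.

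The key step is the choice of $\{w_n\}$, and here the several-variable phenomena enter. Unlike the disk case, $\varphi$ and $\psi$ need not be comparable (one image need not contain the other), and the "translation parts" differ in both the $w_1$-direction (scalars $b_1,b_2$, vectors $\delta_1,\delta_2$) and the $w'$-direction ($A_1,A_2,\gamma_1,\gamma_2$). I would split into cases according to where $\Phi$ and $\Psi$ first disagree. If the $w'$-components differ — i.e. $A_1 \ne A_2$ or $\gamma_1 \ne \gamma_2$ — then choosing $w_n$ with $w'_n$ large (say $w'_n = n u$ for a suitable fixed direction $u$, and $w_1^{(n)}$ chosen to keep $(w_n) \in \Gamma_k$) makes $|\Phi(w_n) - \Psi(w_n)|$ grow like $n$ while the denominator $|u_1 - \overline{v_1}|$-type quantity in $\rho_{\mathbb{H}}$ grows comparably, forcing $\rho_{\mathbb{H}}(\Phi(w_n),\Psi(w_n))$ toward a positive limit (or toward $1$). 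If instead $A_1 = A_2$, $\gamma_1 = \gamma_2$ but the first coordinates differ — then $\Phi(w) - \Psi(w)$ has $w'$-component zero and first component $2i\langle w', \delta_1 - \delta_2\rangle + (b_1 - b_2)$; picking $w'_n$ along a direction for which $\langle w'_n, \delta_1 - \delta_2\rangle$ is real (or zero) and letting $t = \operatorname{Re} w_1^{(n)} \to \infty$ with everything else fixed reduces this to essentially the one-variable computation $\rho_{\mathbb{H}}(w + c, w)$ with $c \ne 0$ constant, which is a positive constant. I would use the explicit first-coordinate formula~(\ref{firstcoordpara}) to verify the differentiability-at-$e_1$ facts needed for the second factor in~(\ref{phexpress}).

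I expect the main obstacle to be the first case — handling a nontrivial difference in the $w'$-parts of $\Phi$ and $\Psi$ — because there the map no longer looks like a translation in any single clean coordinate, and the pseudohyperbolic metric $\rho_{\mathbb{H}}$ on $\mathbb{H}_N$ does not have as transparent a closed form as $\left|\frac{u-v}{u-\bar v}\right|$ did in dimension one. Concretely, one must estimate $1 - \rho_{B_N}(C^{-1}\Phi(w_n), C^{-1}\Psi(w_n))^2 = \frac{(1-|C^{-1}\Phi(w_n)|^2)(1-|C^{-1}\Psi(w_n)|^2)}{|1 - \langle C^{-1}\Phi(w_n), C^{-1}\Psi(w_n)\rangle|^2}$ and show it stays bounded away from $1$. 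The plan is to pull this back to $\mathbb{H}_N$ coordinates, where $1 - |C^{-1}(w)|^2$ is comparable to $(\operatorname{Im} w_1 - |w'|^2)/|w_1 + i|^2$ and the inner product $1 - \langle C^{-1}v, C^{-1}u\rangle$ is comparable to $\frac{i}{2}(\overline{u_1} - v_1) + \langle v', u'\rangle$ divided by $(v_1+i)\overline{(u_1+i)}$; then along $\Gamma_k$ the numerator factors are each of order $k/|w_n|$ while, because the $w'$-components of $\Phi(w_n)$ and $\Psi(w_n)$ genuinely differ, the quantity $\langle \Phi(w_n)', \Psi(w_n)'\rangle$ fails to match $\frac{i}{2}(\overline{\Psi(w_n)_1} - \Phi(w_n)_1)$ by a term of the right order, keeping the denominator from collapsing. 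I would also record, as in the disk proof, that the second factor of~(\ref{phexpress}) has a nonzero limit because $\varphi,\psi$ are smooth at $e_1$ with $D_1\varphi_1(e_1) = D_1\psi_1(e_1) = 1$ and the sequence lives on a fixed $\Gamma_k$; then Theorem~\ref{phdist} finishes the argument.
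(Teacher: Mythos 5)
Your overall framework is the right one --- conjugate to ${\mathbb H}_N$ via the Cayley transform, keep the test sequence on a fixed $\Gamma_k$ so that the second factor in~(\ref{phexpress}) is controlled by the Taylor expansion of $\varphi_1$ at $e_1$, and feed the result to Theorem~\ref{phdist} --- and your second case (fixed $w'$-component, $\operatorname{Re} w_1^{(n)}\to\infty$) is essentially the paper's argument. The gap is in your first case. There you propose letting $|w'_n|\to\infty$, and the entire burden of the proof then falls on the assertion that the denominator $|1-\langle C^{-1}\Phi(w_n),C^{-1}\Psi(w_n)\rangle|^2$ ``fails to collapse'' because the $w'$-parts of the images genuinely differ. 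That is exactly the estimate that needs to be proved, and you have not proved it: with both image points escaping to infinity in ${\mathbb H}_N$, their separation growing like $n$ in the $w'$-direction while $\operatorname{Im}w_1^{(n)}\sim |w'_n|^2\sim n^2$, the competing growth rates in the numerator and denominator of $1-\rho^2$ must be tracked precisely, and nothing in your sketch pins them down. As written, Case 1 is a restatement of the difficulty rather than a resolution of it.

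The idea you are missing is that Case 1 never has to be confronted: a \emph{fixed} second coordinate $c$ works in every case. Take $w^{(n)}=(w_1^{(n)},c)$ with $\operatorname{Im}w_1^{(n)}-|c|^2=k$ and $\operatorname{Re}w_1^{(n)}\to\infty$. The map $h(v_1,v')=(v_1-\operatorname{Re}w_1^{(n)},v')$ is an automorphic ${\mathbb H}$-translation, so by automorphism invariance of $\rho_{\mathbb H}$,
$$\rho_{\mathbb H}\bigl(\Phi(w^{(n)}),\Psi(w^{(n)})\bigr)
=\rho_{\mathbb H}\bigl((i(k+|c|^2)+2i\langle c,\delta_1\rangle+b_1,\,A_1c+\gamma_1),\ (i(k+|c|^2)+2i\langle c,\delta_2\rangle+b_2,\,A_2c+\gamma_2)\bigr),$$
which is independent of $n$. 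If $\varphi\neq\psi$ then not all of $b_1=b_2$, $\delta_1=\delta_2$, $A_1=A_2$, $\gamma_1=\gamma_2$ hold, so $c$ can be chosen to make the two points on the right distinct, and the pseudohyperbolic distance is then a positive constant --- no case analysis, no asymptotics in $w'$, and no need for the delicate estimate your Case 1 requires. With that replacement, your treatment of the second factor of~(\ref{phexpress}) (points of $\Gamma_k$ map to points of a fixed $\Gamma_{k'}$, and $(1-\varphi_1(z))/(1-z_1)\to 1$ since $D_1\varphi_1(e_1)=1$ and the cross terms are controlled by $|z'|^2/|1-z_1|\le 2$) goes through, and Theorem~\ref{phdist} finishes the proof.
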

\begin{proof}

We will show that for distinct maps $\varphi$ and $\psi$ as in the hypothesis, there exists
a sequence of points $\{z^{(n)}\}$ in $B_N$ such that
\begin{itemize}
\item[(a)] $z^{(n)}\rightarrow e_1$ as $n\rightarrow \infty$.
\item[(b)] For all $n$, $\rho_{B_N}(\varphi(z^{(n)}),\psi(z^{(n)}))$ has a strictly positive constant value.
\item[(c)] $(1-|\varphi(z^{(n)})|^2)/(1-|z^{(n)}|^2)$ has finite positive limit as $n\rightarrow \infty$.
\end{itemize}
An appeal to Theorem~\ref{phdist} will then complete the proof.

We will use the corresponding upper case letters for a self-map of $B_N$ conjugated to ${\mathbb H}_N$,
so that $\Phi=C\varphi C^{-1}$ and $\Psi=C\psi C^{-1}$.  These maps have the forms
$$\Phi(w_1,w')=(w_1+2i\langle w',\delta_1\rangle+b_1,A_1w'+\gamma_1) $$
and
$$\Psi(w_1,w')=(w_1+2i\langle w',\delta_2\rangle+b_2,A_2w'+\gamma_2) $$
for some $\delta_i$ and $\gamma_i$ in ${\mathbb C}^{N-1}$, scalars $b_i$
and $(N-1)\times (N-1)$ matrices $A_i$, $i=1,2$.

Fix a sequence of points $\{w^{(n)}\}= \{(w_1^{(n)},c)\}$ in ${\mathbb H}_N$,
where $c$ is a constant in ${\mathbb C}^{N-1}$ to be determined,
satisfying
\begin{itemize}
\item[(i)] $\mbox{Im }w_1^{(n)}-|c|^2=k$
\item[(ii)] $C^{-1}(w_1^{(n)},c)\rightarrow e_1$
\end{itemize}
where $k>0$ is fixed but arbitrary.
By (i), the points $C^{-1}(w^{(n)})$ lie on the
ellipsoid
$E(k,e_1)$.
We can ensure that condition (ii) holds by requiring that $\mbox{Re}\ w_1^{(n)}\rightarrow
\infty$. 

Let $$P_1^{(n)}=\Phi(w^{(n)})=(w_1^{(n)}+2i\langle c,\delta_1\rangle+b_1,A_1c+\gamma_1)
$$
and
$$P_2^{(n)}=\Psi(w^{(n)})=(w_1^{(n)}+2i\langle c,\delta_2\rangle+b_2,A_2c+\gamma_2).
$$  
Since the pseudohyperbolic metric is automorphism invariant, we have
$$
\rho_{\mathbb H}(P_1^{(n)},P_2^{(n)})=\rho_{\mathbb H}(h(P_1^{(n)}),
h(P_2^{(n)}))$$
where $h$ is the automorphic ${\mathbb H}$-translation given by
$$h(v_1,v')=(v_1-\mbox{Re } w_1^{(n)},v').$$
Thus for any positive integer $n$,
the distance $\rho_{\mathbb H}(P_1^{(n)},P_2^{(n)})$ is equal to
$$\rho_{\mathbb H}((i(k+|c|^2)+2i\langle c,\delta_1\rangle+b_1,A_1c+\gamma_1),
(i(k+|c|^2)+2i\langle c,\delta_2\rangle+b_2,A_2c+\gamma_2)).
$$
Note that this quantity is independent of the particular point
$w^{(n)}$ in our sequence chosen to satisfy of
(i) and (ii), and that if $\varphi\neq \psi$
(so that not all of $b_1=b_2, \delta_1=\delta_2, A_1=A_2$ and $\gamma_1=\gamma_2$
hold) we may certainly choose $c$ so that this quantity is not $0$.
Thus for such a choice, condition (ii) gives the existence of a sequence of
points $\{z^{(n)}\}$ in $B_N$ tending to $e_1$ along $E(k,e_1)$ for which
$$\rho_{B_N}(\varphi(z^{(n)}),\psi(z^{(n)}))$$ is a positive constant value;
the $z^{(n)}$'s being just the inverse images under the Cayley transform
$C$ of our chosen
points $w^{(n)}$ in ${\mathbb H}_N$.  Hence conditions (a) and (b) hold.

For property (c), first note that the images under $\Phi$ of points of the form
$(w_1,c)$ with $\mbox{Im }w_1-|c|^2=k$  look like
$$(w_1+2i\langle c,\delta_1\rangle+b_1,A_1c+\gamma_1),$$
and for these points we see that
$$\mbox{Im }(w_1+2i\langle c,\delta_1\rangle+b_1)-|A_1c+\gamma_1|^2=k+|c|^2+
\mbox{Im}(2i\langle c,\delta_1\rangle+b_1)-|A_1c+\gamma_1|^2,$$
which is constant, say $k'$.  In other words, the image under $\varphi$ of our points
$z^{(n)}$ lie on some ellipsoid $E(k',e_1)$
and
$$|1-\varphi_1(z^{(n)})|^2= \frac{1}{k'}(1-|\varphi(z^{(n)})|^2).$$
Moreover, since the points $z^{(n)}$ lie on $E(k,e_1)$, we have
$$|1-z^{(n)}_1|^2= \frac{1}{k}(1-|z^{(n)}|^2).$$
Thus
\begin{equation}\label{relatingtoder}
\frac{1-|\varphi(z^{(n)})|^2}{1-|z^{(n)}|^2}=\frac{k'}{k}\frac{|1-\varphi_1(z^{(n)})|^2}{|1-z^{(n)}_1|^2}.
\end{equation}

Since $\varphi_1$ is differentiable at $e_1$, 
we have a Taylor series expansion of $\varphi_1$ in a neighborhood of
$e_1$:
\begin{eqnarray*}
\varphi_1(z)&=&\varphi_1(e_1)+D_1\varphi_1(e_1)(z_1-1)+\sum_{j=2}^{N}D_j\varphi_1(e_1)z_j+
\frac{1}{2!}D_{11}\varphi_1(e_1)(z_1-1)^2\\&+&\sum_{j=2}^ND_{1j}\varphi_1(e_1)(z_1-1)z_j+\sum_{k,j=2;k\neq j}^ND_{kj}\varphi_1(e_1)z_kz_j
+\frac{1}{2!}\sum_{j=2}^ND_{jj}\varphi_1(e_1)z_j^2+\cdots.
\end{eqnarray*}
Recall that by
hypothesis $D_1\varphi_1(e_1)=1$.  Direct computation using Equation~(\ref{firstcoordpara}) shows that $D_j\varphi_1(e_1)=0,$ 
for $j=2,...,N$ (this also follows more generally from the fact that
$e_1$ is a fixed point of $\varphi$; see Lemma 6.6 in \cite{cmbook}) and $D_{kj}\varphi_1(e_1)=0$ for $k,j=2,...,N$. 
Thus
$$\varphi_1(z)-1=(z_1-1)+\frac{1}{2!}\left[D_{11}\varphi_1(e_1)(z_1-1)^2
+2\sum_{j=2}^ND_{1j}\varphi_1(e_1)(z_1-1)z_j\right]
+\cdots,$$
where the $+\cdots$ indicates higher order terms
of the form
$$\frac{D^{\nu}\varphi_1(e_1)(z-e_1)^{\nu}}{\nu!},$$
and $\nu$ is a multi-index of order at least $3$.  Since
for $z\in B_N$ we have
$$\frac{|z_2|^2+\cdots+|z_N|^2}{|1-z_1|}\leq \frac{1-|z_1|^2}{|1-z_1|}\leq 2\frac{1-|z_1|}{|1-z_1|}\leq 2$$
we see that
$$\frac{1-\varphi_1(z)}{1-z_1}\rightarrow 1 \mbox{ as }z\rightarrow e_1 \mbox{ in }B_N.$$
By~(\ref{relatingtoder}) this implies that
$$\frac{1-|\varphi(z^{(n)})|^2}{1-|z^{(n)}|^2}$$ has a positive finite limit as $n\rightarrow \infty$, 
and property (c) holds as desired.

\end{proof}

To prove Theorem~\ref{scvmain} we will use the preceding result and
the following qualitative generalization of Theorem 9.16 in \cite{cmbook},
specialized to linear-fractional maps.  In the statement we use
the notation $\psi_{\zeta}$ for the coordinate of $\psi$ in the
$\zeta-$direction, that is $\psi_{\zeta}(z)=\langle \psi(z),\zeta\rangle$.
Moreover the derivative of $\psi_{\zeta}$ in the $\eta$ direction,
denoted $D_{\eta}\psi_{\zeta}$, 
is defined by $D_{\eta}\psi_{\zeta}(z)\equiv \langle \psi'(z)\eta,\zeta\rangle$.
Note that when $\zeta=\eta=e_1$ this is just $D_1\psi_1(z)$.

For $\eta\in\partial B_N$, write $[\eta]$ for the complex
line containing $\eta$ and $0$; that is, the one-dimensional
subspace of $\overline{B_N}$ consisting of
all points of the form $\{\alpha \eta:\alpha \in \mathbb{C}\}$.
In particular, the complex line $[e_1]$ intersected with $B_N$
consists of all points in the ball
whose last $N-1$ coordinates are $0$.

\begin{thm}\label{samefirst}
Suppose $\varphi$ and $\psi$ are linear-fractional self-maps of $B_N$, 
and suppose $\varphi(\zeta)=\zeta$ for some
$\zeta\in\partial B_N$.
If either $\psi(\zeta)\neq \zeta$, or $\psi(\zeta)=\zeta$ and $D_{\zeta}\varphi_{\zeta}(\zeta)\neq
D_{\zeta}\psi_{\zeta}(\zeta)$, then $C_{\varphi}-C_{\psi}$ is not compact.
\end{thm}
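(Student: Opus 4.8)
The plan is to apply Theorem~\ref{phdist}: in each of the two cases we produce a sequence $z^{(n)}$ in $B_N$ tending to $\partial B_N$ along which the expression~(\ref{phexpress}) stays bounded away from $0$. Because every term of~(\ref{phexpress}) is nonnegative, it suffices to bound below the single summand
$$\rho_{B_N}(\varphi(z^{(n)}),\psi(z^{(n)}))\,\frac{1-|z^{(n)}|^2}{1-|\varphi(z^{(n)})|^2},$$
so we never have to examine the behavior of $\psi$ near $\zeta$ in detail. After conjugating $\varphi$ and $\psi$ by a unitary taking $\zeta$ to $e_1$ --- an automorphism of $B_N$, which preserves compactness of the difference and turns the hypotheses into the same hypotheses with $\zeta=e_1$ --- we may assume $\zeta=e_1$, and we take $z^{(n)}=r_n e_1$ with $r_n\uparrow 1$; write $t=1-r_n$. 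Expanding the linear-fractional map $\varphi$ to first order at $e_1$ and setting $a=D_1\varphi_1(e_1)$ gives $\varphi_1(r_n e_1)=1-at+O(t^2)$; the coordinates $\varphi_2,\dots,\varphi_N$ vanish at $e_1$ (since $\varphi(e_1)=e_1$), hence are $O(t)$ along $\{r_n e_1\}$. Thus $1-|\varphi(r_n e_1)|^2=2at+O(t^2)$, and since $a$ is a positive real (the Julia--Carath\'eodory theorem applies, linear-fractional maps being holomorphic past $\partial B_N$),
$$\frac{1-|z^{(n)}|^2}{1-|\varphi(z^{(n)})|^2}=\frac{2t+O(t^2)}{2at+O(t^2)}\longrightarrow\frac1a>0 .$$

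\emph{Case 1: $\psi(\zeta)\neq\zeta$.} Here $\varphi(r_n e_1)\to e_1\in\partial B_N$ while $\psi(r_n e_1)\to\psi(e_1)\neq e_1$. Since $|\langle e_1,\psi(e_1)\rangle|=1$ would force $\psi(e_1)=e_1$ by the equality case of Cauchy--Schwarz, we have $|1-\langle e_1,\psi(e_1)\rangle|>0$; so in~(\ref{defofrho}) the numerator tends to $0$ and the denominator to a positive limit, giving $\rho_{B_N}(\varphi(z^{(n)}),\psi(z^{(n)}))\to 1$. The displayed summand then tends to $1/a>0$, and Theorem~\ref{phdist} completes this case.

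\emph{Case 2: $\psi(\zeta)=\zeta$ and $b:=D_1\psi_1(e_1)\neq a$.} Expanding $\psi$ the same way, and using that the last $N-1$ coordinates of $\varphi$ and $\psi$ are $O(t)$, one gets $\langle\varphi(r_n e_1),\psi(r_n e_1)\rangle=1-(a+b)t+O(t^2)$, whence $|1-\langle\varphi(r_n e_1),\psi(r_n e_1)\rangle|^2=(a+b)^2t^2+O(t^3)$. Substituting the three first-order estimates into~(\ref{defofrho}) yields
$$\rho_{B_N}(\varphi(z^{(n)}),\psi(z^{(n)}))^2\longrightarrow 1-\frac{4ab}{(a+b)^2}=\left(\frac{a-b}{a+b}\right)^{\!2}>0,$$
the strict inequality because $a\neq b$. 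Hence the displayed summand tends to $\dfrac{|a-b|}{a(a+b)}>0$, and Theorem~\ref{phdist} again shows that $C_\varphi-C_\psi$ is not compact on ${\mathcal H}$.

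The step needing genuine care is the first-order analysis in Case~2: one must check that the last $N-1$ coordinates of $\varphi$ and $\psi$ influence $1-|\varphi(z^{(n)})|^2$ and $1-\langle\varphi(z^{(n)}),\psi(z^{(n)})\rangle$ only at order $t^2$, so that the limiting value of $\rho_{B_N}$ is determined entirely by the scalars $a$ and $b$. The input that these boundary derivatives are positive reals is exactly the Julia--Carath\'eodory theorem on $B_N$; it can alternatively be read off the normal form of a linear-fractional self-map of ${\mathbb H}_N$ fixing $\infty$, in the spirit of the proof of Theorem~\ref{parabolics}. Everything else is routine.
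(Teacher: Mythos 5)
Your proof is correct, but it takes a genuinely different route from the paper's. The paper argues as in Theorem 9.16 of \cite{cmbook}, testing $(C_{\varphi}-C_{\psi})^*$ on normalized reproducing kernels; in the case $\varphi(e_1)=\psi(e_1)=e_1$ with $a=D_1\varphi_1(e_1)\neq D_1\psi_1(e_1)=b$ a radial sequence does not suffice there, because the cross term $2\,\mbox{Re }K_{\varphi(z)}(\psi(z))/\|K_z\|^2$ tends along $z=re_1$ to $2\left(2/(a+b)\right)^{\beta}$, which need not be dominated by $a^{-\beta}$; so the paper sends $z^{(n)}\rightarrow e_1$ along the boundary of a nontangential approach region of large aperture $M$ in $[e_1]$, making the term $|a-b|M$ dominate, and needs the Julia--Carath\'eodory estimates $|\varphi_k(z)|^2/|1-z_1|\rightarrow 0$ for $k\geq 2$. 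You instead invoke Theorem~\ref{phdist} with a purely radial sequence and compute that $\rho(\varphi(re_1),\psi(re_1))\rightarrow |a-b|/(a+b)>0$ (respectively $\rightarrow 1$ when $\psi(e_1)\neq e_1$) while $(1-r^2)/(1-|\varphi(re_1)|^2)\rightarrow 1/a$. The computations check out: since a linear-fractional self-map is holomorphic in a neighborhood of $\overline{B_N}$, the expansions $\varphi_1(re_1)=1-at+O(t^2)$ and $\varphi_k(re_1)=O(t)$ are legitimate, so the last $N-1$ coordinates really do enter $1-|\varphi|^2$ and $1-\langle\varphi,\psi\rangle$ only at order $t^2$ (the point you rightly flag), and $a,b>0$ by Julia--Carath\'eodory. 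Your route is shorter and more uniform with the rest of the paper, which uses Theorem~\ref{phdist} everywhere else; what the paper's kernel argument buys is a quantitative essential-norm lower bound and the extension, noted in the Remark following the theorem, to general self-maps having only radial limits and angular derivatives at $\zeta$ --- your radial Taylor expansion leans on smoothness past the boundary, though it could be patched with $o(\cdot)$ estimates from Julia--Carath\'eodory. One microscopic quibble in Case 1: equality in Cauchy--Schwarz gives only $\psi(e_1)=\lambda e_1$ with $|\lambda|=1$; the clean statement is that $\langle e_1,\psi(e_1)\rangle=1$ forces $\psi_1(e_1)=1$, whence $\psi(e_1)=e_1$ because $|\psi(e_1)|\leq 1$.
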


\begin{proof}
The argument follows that of Theorem 9.16 in \cite{cmbook}.
Without loss of generality we may assume $\zeta=e_1$.

First suppose $\psi(e_1)\neq e_1$.  If we can find
a sequence of
points $z^{(n)}$ tending to $\partial B_N$, so that 
$$\left\|(C_{\varphi}-C_{\psi})^*\left(\frac{K_{z^{(n)}}}{\|K_{z^{(n)}}\|}\right)\right\|$$
is bounded away from $0$,
where $K_{z^{(n)}}$ denotes the kernel function in ${\mathcal H}$ at $z^{(n)}$ (see Equations~(\ref{H2ker})
and~(\ref{A2ker})),
then $C_{\varphi}-C_{\psi}$ is not compact,
since the normalized kernel functions $K_{z^{(n)}}/\|K_{z^{(n)}}\|$ tend weakly to $0$ as $z^{(n)}\rightarrow
\partial B_N$.
Using the fact that for any bounded composition operator $C_{\tau}$ we have
$C_{\tau}^*(K_z)=K_{\tau(z)}$, we see that
$$\|(C_{\varphi}-C_{\psi})^*(K_z)\|^2=\|K_{\varphi(z)}\|^2+\|K_{\psi(z)}\|^2-2\mbox{Re }K_{\varphi(z)}(\psi(z)),$$
and thus
\begin{equation}\label{adjkercomp}
\|(C_{\varphi}-C_{\psi})^*(K_z/\|K_z\|)\|^2\geq \left(\frac{1-|z|^2}{1-|\varphi(z)|^2}\right)^{\beta}
-2\mbox{Re }\frac{K_{\varphi(z)}(\psi(z))}{\|K_z\|^2}
\end{equation}
where $\beta=N$ in $H^2(B_N)$ and $\beta =N+1+\alpha $ in $A^2_{\alpha}(B_N)$.  
With our assumption that $\psi(e_1)\neq e_1$, it is easy to see that as $z\rightarrow e_1$ radially, the second
term on the right hand side of Equation~(\ref{adjkercomp}) tends to $0$.  By Julia-Caratheodory
theory (see for example, \cite{Ru}, Section 8.5), the first term
tends to the positive value $(D_1\varphi_1(e_1))^{-\beta}$.
This shows that $C_{\varphi}-C_{\psi}$ is not compact if $ \psi(e_1)\neq \varphi(e_1)$.

Now suppose $ \psi(e_1)= \varphi(e_1)=e_1$ but $D_1\psi_1(e_1)\neq D_1\varphi_1(e_1)$.
As before, if we can find a sequence of points $z^{(n)}$ in $B_N$ tending to $e_1$
along which 
$$\|(C_{\varphi}-C_{\psi})^*(K_{z^{(n)}}/\|K_{z^{(n)}}\|)\|$$ 
is bounded away from $0$, then we can conclude that $C_{\varphi}-C_{\psi}$ is not
compact.
The sequence $\{z^{(n)}\}$ will chosen so that $z^{(n)}=(z_1^{(n)},0')$ where
$$\frac{|1-z_1^{(n)}|}{1-|z^{(n)}|^2}=\frac{|1-z_1^{(n)}|}{1-|z_1^{(n)}|^2}=M$$
for a fixed and suitably large value of $M$; that is, the points $z^{(n)}$
will approach $e_1$ along the boundary of a non-tangential approach region, of large aperture,
in the complex line $[e_1]$.  To analyze the second term on the right hand side
of Equation~(\ref{adjkercomp}), we first consider
\begin{equation}\label{threeterm}
\frac{1-\langle \varphi(z),\psi(z)\rangle}{1-|z|^2}=
\frac{1-|\varphi(z)|^2}{1-|z|^2}+\frac{\langle \varphi(z)-e_1,\varphi(z)-\psi(z)\rangle}{1-|z|^2}
+\frac{\langle e_1,\varphi(z)-\psi(z)\rangle}{1-|z|^2}.
\end{equation}
The third term on the right hand side of Equation~(\ref{threeterm}) has modulus
$$\frac{|\varphi_1(z)-\psi_1(z)|}{1-|z|^2}=
\left|\frac{1-\psi_1(z)}{1-z_1}-\frac{1-\varphi_1(z)}{1-z_1}\right|
\frac{|1-z_1|}{1-|z|^2},$$
and if $z$ is chosen to approach $e_1$ in $[e_1]$ along
the curve $|1-z_1|/(1-|z_1|^2)=M$ this will tend to
$|D_1\psi_1(e_1)-D_1\varphi_1(e_1)|M$. Since $D_1\varphi_1(e_1)\neq D_1\psi_1(e_1)$
by assumption, this can be made
as large as desired by choosing $M$ large.  

The first term on
the right hand side of Equation~(\ref{threeterm}) tends
to $|D_1\varphi_1(e_1)|$ along any sequence of points approaching
$e_1$ non-tangentially in $[e_1]$.  We claim that the second
term in~(\ref{threeterm}) tends to $0$ along any such sequence.  To see this, it's
enough to show that
$$\frac{|\varphi(z)-e_1||\varphi(z)-\psi(z)|}{1-|z|^2}$$
tends to $0$ as $z$ approaches $e_1$ non-tangentially in $[e_1]$.
Since
$$|\varphi(z)-\psi(z)|\leq |\varphi(z)-e_1|+|e_1-\psi(z)|$$
it suffices to show
that
$$\frac{|\varphi(z)-e_1|^2}{1-|z|^2}$$
and 
$$\frac{|\varphi(z)-e_1||\psi(z)-e_1|}{1-|z|^2}$$
both tend to $0$.  We have
$$\frac{|\varphi(z)-e_1|^2}{1-|z|^2}=\frac{|\varphi_1(z)-1|^2+|\varphi'(z)|^2}{|1-z_1|}\ \frac{|1-z_1|}{1-|z|^2}$$
where $\varphi'(z)$ denotes the $(N-1)$-tuple $(\varphi_2(z),\ldots,\varphi_N(z))$.
We're considering points $z^{(n)}=(z_1^{(n)},0')$ tending to $e_1$ for which
$|1-z_1^{(n)}|/(1-|z_1^{(n)}|^2)$ is some constant value $M$.  Along such a sequence,
$|1-\varphi_1(z)|/|1-z_1|$ tends to $|D_1\varphi_1(e_1)|$, so
that 
$$\frac{|\varphi_1(z)-1|^2}{|1-z_1|}\frac{|1-z_1|}{1-|z|^2}\rightarrow 0.$$
By the Julia-Caratheodory theorem (\cite{Ru}, Theorem 8.5.6),
we also have for $2\leq k\leq N$,
$$\frac{|\varphi_k(z)|^2}{|1-z_1|}\rightarrow 0$$
along any non-tangential sequence approaching $e_1$ in $[e_1]$.
Since a similar analysis applies to show that
$$\frac{|\psi(z)-e_1|^2}{1-|z|^2}\rightarrow 0$$
along the sequences under consideration,
it follows that
$$\frac{|\varphi(z)-e_1||\psi(z)-e_1|}{1-|z|^2}\rightarrow 0$$
as desired.

Thus we have shown the following:  If $\varphi(e_1)=\psi(e_1)=e_1$ but
$D_1\varphi_1(e_1)\neq D_1\psi_1(e_1)$,
then given $\epsilon>0$ there exists $M<\infty$ so that if
$z^{(n)}=(z_1^{(n)},0')$ approaches $e_1$ as $n\rightarrow\infty$, where
$$\frac{|1-z_1^{(n)}|}{1-|z_1^{(n)}|^2}=M,$$
then 
$$\limsup_{n\rightarrow\infty}\frac{|K_{\varphi(z^{(n)})}(\psi(z^{(n)}))|}{\|K_{z^{(n)}}\|^2}<\epsilon.$$
By Equation~(\ref{adjkercomp}) this
says $C_{\varphi}-C_{\psi}$ is not compact.

\end{proof}

Remark:  It's clear that a version of Theorem~\ref{samefirst} holds, with essentially
the same proof, when $\varphi$ and $\psi$ are more general analytic self-maps
of $B_N$, if in the statement of the theorem the values of $\varphi,\psi, D_{\zeta}\varphi_{\zeta}$
and $D_{\zeta}\psi_{\zeta}$ at $\zeta$ are replaced by their radial limits there.
Also a version of the result, with the hypothesis $\varphi(\zeta)=\zeta$ replaced
by $\varphi(\zeta)=\eta$ for $\zeta,\eta\in\partial B_N$ can be formulated.  Since
we do not need these more general versions, we leave the precise statements
to the interested reader.

\begin{prop}\label{niceform2}
Suppose that $\tau$ is a linear-fractional self-map
of $B_N$ such that the restriction of $\tau$ to the 
the complex
line $[e_1]$ in $\overline{B_N}$ is the identity on $[e_1]\cap \partial B_N$.
Then $\tau(z_1,z_2,\cdots, z_N)=(z_1,A'z')$, where
$z'$ denotes $(z_2, \cdots, z_N)$ and $A'$ is an $(N-1)\times(N-1)$
matrix.  
\end{prop}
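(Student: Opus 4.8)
The plan is to read off the coefficient structure of $\tau$ from the boundary hypothesis, and then to use that $\tau$ maps $B_N$ into $B_N$ to eliminate the remaining terms. Write $\tau(z)=(Az+B)/(\langle z,C\rangle+d)$ as in~(\ref{defoflinfrac}). Since $\tau$ is a self-map of $B_N$, its denominator is non-vanishing on $\overline{B_N}$ (and $|d|>0$), so $\tau$ restricts to a well-defined analytic map on the complex line $[e_1]$. For $z=\alpha e_1$ with $\alpha\in\overline{\D}$ we have $\tau(\alpha e_1)=(\alpha(Ae_1)+B)/(\alpha\overline{C_1}+d)$, where $Ae_1$ denotes the first column of $A$ and $C_1$ the first coordinate of $C$. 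The hypothesis forces this to equal $\alpha e_1$ whenever $|\alpha|=1$; clearing the nonzero denominator gives a ${\mathbb C}^N$-valued polynomial identity in $\alpha$ holding at infinitely many points, hence identically. Comparing, coordinate by coordinate, the coefficients of $1$, $\alpha$, $\alpha^2$ yields $C_1=0$, $B=0$, and $Ae_1=de_1$.

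Writing $z=(z_1,z')$ with $z'=(z_2,\dots,z_N)$, $C=(0,C')$ with $C'\in{\mathbb C}^{N-1}$, and partitioning $A$ into blocks with first column $de_1$, first row $(d,r)$ for a $1\times(N-1)$ row vector $r$, and lower-right $(N-1)\times(N-1)$ block $A'$, we obtain
\[
\tau(z)=\left(\frac{dz_1+rz'}{\langle z',C'\rangle+d},\ \frac{A'z'}{\langle z',C'\rangle+d}\right).
\]
It remains only to prove $r=0$ and $C'=0$: granting this, $\tau(z)=(z_1,\tfrac{1}{d}A'z')$, which is the asserted form after renaming $\tfrac{1}{d}A'$ as $A'$.

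This last step is the crux, and the place I expect the real work. From $|\tau(z)|<1$ on $B_N$ and continuity of both sides (the denominator being nonzero on $\overline{B_N}$) we get $|dz_1+rz'|^2+|A'z'|^2\le|\langle z',C'\rangle+d|^2$ for all $z\in\overline{B_N}$. Fix a unit vector $u\in{\mathbb C}^{N-1}$ and a small $t>0$. After replacing $u$ by a suitable unimodular multiple of itself — which alters none of $|ru|$, $|A'u|$, $|\langle u,C'\rangle|$ — one may assume $t\langle u,C'\rangle$ points in the direction of $-d$, so $|t\langle u,C'\rangle+d|=|d|-t|\langle u,C'\rangle|$. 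Taking $z'=tu$ and choosing $z_1$ of modulus $\sqrt{1-t^2}$ so that $dz_1$ has the same argument as $rz'$, the displayed inequality becomes
\[
\bigl(|d|\sqrt{1-t^2}+t|ru|\bigr)^2+t^2|A'u|^2\le\bigl(|d|-t|\langle u,C'\rangle|\bigr)^2 .
\]
Expanding, cancelling $|d|^2$, dividing by $t$, and letting $t\to0^+$ gives $|d|\bigl(|ru|+|\langle u,C'\rangle|\bigr)\le0$; since $|d|>0$ and $u$ ranges over all unit vectors, $r=0$ and $C'=0$, which finishes the proof.

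The main obstacle is precisely this estimate, and in particular the two phase normalizations: a careless substitution yields only $|ru|\le|\langle u,C'\rangle|$, whereas arranging the first-order-in-$t$ term to be extremal is what collapses both quantities to $0$. A cleaner alternative for the same step is available: since $B=0$ implies $\tau(0)=0$, the Schwarz lemma on $B_N$ shows $\tau'(0)=A/d$ has operator norm at most $1$; because $\tau$ fixes the segment $[0,e_1]$ we have $\tau'(0)e_1=e_1$, so $e_1$ is a norm-maximizing vector and hence $(A/d)^*e_1=e_1$, i.e. $r=0$; feeding this back, substituting $z'=\epsilon z'$ into the self-map inequality and letting $\epsilon\to0^+$ forces $\mbox{Re}\,(\overline{d}\langle z',C'\rangle)\le0$ for all $z'$, whence (applying this to $\pm z'$ and $\pm iz'$) $C'=0$.
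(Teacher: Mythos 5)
Your proof is correct, and its first half is essentially the paper's: both arguments clear the (nonvanishing) denominator in the identity $\tau(\alpha e_1)=\alpha e_1$ on $|\alpha|=1$ and compare coefficients of the resulting vector-valued polynomial in $\alpha$ to obtain $B=0$, $C_1=0$ and $Ae_1=de_1$. Where you genuinely diverge is in eliminating the remaining first-row entries $r=(a_{12},\dots,a_{1N})$ and the denominator coefficients $C'$. The paper does this by invoking Lemma 6.6 of \cite{cmbook} (a self-map of $B_N$ fixing a boundary point has vanishing derivative of its first coordinate in the orthogonal directions there), applied at the two fixed points $e_1$ and $-e_1$ to get $a_{1k}-c_k=0$ and $a_{1k}+c_k=0$, hence $a_{1k}=c_k=0$. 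You instead derive the same vanishing directly from the self-map inequality $|Az|^2\leq|\langle z,C\rangle+d|^2$ on $\partial B_N$ by a first-order perturbation at $e_1$, with the phases of $z_1$ and $z'$ tuned so that the $O(t)$ terms on the two sides have opposite signs; your Schwarz-lemma variant ($\tau(0)=0$, $\|A/d\|\leq 1$, $e_1$ norm-maximizing) is a second self-contained route. The paper's citation buys brevity; your argument buys self-containedness and makes explicit that it is the self-map hypothesis, not merely the boundary identity, that forces $r=0$ and $C'=0$. One small slip in your closing alternative: letting $\epsilon\rightarrow 0^+$ yields $\mathrm{Re}\,(\overline{d}\langle z',C'\rangle)\geq 0$ rather than $\leq 0$; since you apply the conclusion to $\pm z'$ and $\pm iz'$ anyway, the deduction $C'=0$ is unaffected.
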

\begin{proof}
Since $\varphi$ is linear-fractional we have 
$$\varphi(z)=\frac{Az+B}{c_1z_1+c_2z_2+\cdots c_Nz_N+1}$$
for some $N\times N$ matrix $A=(a_{jk})$, $N\times 1$ matrix $B$,
and constants $c_k$.  By hypothesis we must have
$$a_{k1}\lambda+b_k=0$$
for $2\leq k\leq N$ and all $\lambda \in {\mathbb C}$ with $|\lambda|=1$.  Thus
$a_{k1}=b_k=0$ for $2\leq k \leq N$.  From $\varphi(e_1)=e_1$ and
$\varphi(-e_1)=-e_1$ we see that
$b_1=c_1$ and $a_{11}=1$.  Using this, and the requirement
that $a_{11}\lambda +b_1=\lambda(c_1\lambda+1)$ for 
$|\lambda|=1$, we must have $b_1=0$ and thus also $c_1=0$.

Moreover, since $\varphi$ fixes $e_1$, we have by Lemma 6.6 of \cite{cmbook} that
\begin{equation}\label{der1}
D_k\varphi_1(e_1)=0\mbox{ for }k=2,3,\cdots,N. 
\end{equation} 
Since $\varphi(-e_1)=-e_1$, we may apply the same lemma to $-\varphi(-z)$ to
see that 
\begin{equation}\label{der2}
D_k\varphi_1(-e_1)=0 \mbox{ for }k=2,3,\cdots,N.
\end{equation}
Equations~(\ref{der1}) and~(\ref{der2}) together say
$$a_{1k}=c_k=0\mbox{ for }k=2,\cdots,N.$$
This completes the proof.
\end{proof}

To move from Theorem~\ref{parabolics}, which deals with parabolic maps,
to the full result of Theorem~\ref{scvmain}, we will need the notion
of the Krein adjoint of the linear-fractional map $\varphi$.  If $\varphi$
is as
given in Equation~(\ref{defoflinfrac}), its Krein adjoint is defined to be the linear
fractional map
\begin{equation}\label{kreinadjdef}
\sigma_{\varphi}(z)=\frac{A^*z-C}{\langle z,-B\rangle +\overline{d}}.
\end{equation}
This will be a self-map of $B_N$ whenever $\varphi$ is, and
when $\varphi$ is an automorphism, its Krein adjoint is equal
to $\varphi^{-1}$.  Moreover, $\varphi$ and $\sigma_{\varphi}$ have
the same fixed  points on $\partial B_N$; for these and other basic facts, see \cite{cowenmac}
and \cite{mac-weir}.
Properties of the map $\varphi\circ\sigma_{\varphi}$ appear in the next
result.

\begin{thm}\label{univalentcase}
Suppose $\varphi$ and $\psi$ are linear fractional maps with
$\|\varphi\|_{\infty}=\|\psi\|_{\infty}=1$.  Assume further that
at least one of the maps $\varphi, \psi$ is univalent.
If 
$C_{\varphi}-C_{\psi}$ is compact on ${\mathcal H}$,
then $\varphi=\psi$.
\end{thm}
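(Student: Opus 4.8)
The plan is to use the Krein adjoint $\sigma_\varphi$ to ``parabolize'' the comparison, reducing to a pair of maps that share the boundary fixed point $e_1$ with derivative $1$ there, at which point Theorem~\ref{parabolics} (in the parabolic case) and Proposition~\ref{niceform2} (otherwise) take over. Assume $\varphi$ is the univalent one; the case in which $\psi$ is univalent is identical with $\sigma_\psi$ in place of $\sigma_\varphi$ throughout. Since $\varphi$ is univalent, so is $\sigma_\varphi$ (the defining data in~(\ref{kreinadjdef}) is nonsingular exactly when that of $\varphi$ is), so $\sigma_\varphi(B_N)$ is a nonempty open subset of $B_N$; hence it suffices to prove
$$\varphi\circ\sigma_\varphi=\psi\circ\sigma_\varphi,$$
for then $\varphi=\psi$ on the open set $\sigma_\varphi(B_N)$ and, by the identity theorem, on all of $B_N$. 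The reduction rests on the identity $C_{\sigma_\varphi}(C_\varphi-C_\psi)=C_{\varphi\circ\sigma_\varphi}-C_{\psi\circ\sigma_\varphi}$, whose left side is compact whenever $C_\varphi-C_\psi$ is.

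First I would dispose of the automorphism case. If $\varphi$ is an automorphism then $\sigma_\varphi=\varphi^{-1}$, so the difference above is $I-C_{\psi\circ\varphi^{-1}}$; applying Theorem~\ref{samefirst} with ``$\varphi$'' replaced by the identity map (which fixes every boundary point) forces $\psi\circ\varphi^{-1}$ to fix every point of $\partial B_N$, hence $\psi\circ\varphi^{-1}=\mbox{id}$ and $\psi=\varphi$. So assume $\varphi$ is not an automorphism. Since $\|\varphi\|_\infty=1$ there is a $\zeta_0\in\partial B_N$ with $\varphi(\zeta_0)\in\partial B_N$, and by composing $\varphi$ and $\psi$ with a suitable automorphism of $B_N$ and then conjugating by a unitary we may arrange that $\varphi(e_1)=e_1$. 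Theorem~\ref{samefirst} now forces $\psi(e_1)=e_1$ and $\lambda:=D_1\varphi_1(e_1)=D_1\psi_1(e_1)$. Because $\varphi$ and $\sigma_\varphi$ have the same fixed points on $\partial B_N$, $\sigma_\varphi(e_1)=e_1$, and a short computation from~(\ref{kreinadjdef}), using only $\varphi(e_1)=e_1$, gives $D_1(\sigma_\varphi)_1(e_1)=1/\lambda$. Consequently both $\varphi\circ\sigma_\varphi$ and $\psi\circ\sigma_\varphi$ fix $e_1$ with first-coordinate derivative $\lambda\cdot(1/\lambda)=1$ there.

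If $\varphi\circ\sigma_\varphi$ and $\psi\circ\sigma_\varphi$ are both parabolic, Theorem~\ref{parabolics} immediately gives $\varphi\circ\sigma_\varphi=\psi\circ\sigma_\varphi$ and we are done. Otherwise one of them, say $\varphi\circ\sigma_\varphi$, fixes more of $\overline{B_N}$ than $e_1$; by the classification of such maps in \cite{braccietal}, after a further normalization by an automorphism fixing $e_1$ it is the identity on $[e_1]\cap\partial B_N$, so Proposition~\ref{niceform2} gives $\varphi\circ\sigma_\varphi(z)=(z_1,A'z')$. Since $\varphi\circ\sigma_\varphi$ then fixes every point of $[e_1]\cap\partial B_N$, invoking Theorem~\ref{samefirst} at those points forces $\psi\circ\sigma_\varphi$ to fix them too, whence Proposition~\ref{niceform2} also yields $\psi\circ\sigma_\varphi(z)=(z_1,A''z')$. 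It remains to show that if $A'\neq A''$ then $C_{(z_1,A'z')}-C_{(z_1,A''z')}$ is not compact. For this, choose a unit vector $\zeta'$ with $A'\zeta'\neq A''\zeta'$ and apply Theorem~\ref{phdist} to the sequence $z^{(n)}=(1-t_n,\sqrt{t_n}\,\zeta')$, $t_n\to 0^+$: a direct computation shows that $(1-|z^{(n)}|^2)/(1-|\varphi(z^{(n)})|^2)$ and $(1-|z^{(n)}|^2)/(1-|\psi(z^{(n)})|^2)$ have finite positive limits while $\rho_{B_N}(\varphi(z^{(n)}),\psi(z^{(n)}))$ stays bounded away from $0$ (this last being an elementary inequality in $A'\zeta'$ and $A''\zeta'$, using $|A'\zeta'|,|A''\zeta'|\leq 1$), so the quantity in~(\ref{phexpress}) does not tend to zero.

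I expect the last step to be the main obstacle. Theorem~\ref{parabolics} covers only maps that fix no point of $\overline{B_N}$ other than $e_1$, so the pair $(z_1,A'z')$, $(z_1,A''z')$ genuinely needs a new approach-sequence construction; the delicate point is to pick the approach region to $e_1$ finely enough that the purely tangential separation of the images is registered by the pseudohyperbolic factor of~(\ref{phexpress}) without the accompanying approach-rate factor degenerating, and then to verify the (elementary but not entirely obvious) inequality that keeps $\rho_{B_N}(\varphi(z^{(n)}),\psi(z^{(n)}))$ positive in the limit. A secondary point is the bookkeeping behind the identity $D_1(\sigma_\varphi)_1(e_1)=1/\lambda$ and the precise way univalence is used: it is exactly what guarantees $\sigma_\varphi(B_N)$ is open, so that agreement of $\varphi$ and $\psi$ there propagates to all of $B_N$.
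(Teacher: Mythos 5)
Your proposal follows essentially the same route as the paper: reduce via the Krein adjoint to $\varphi\circ\sigma_{\varphi}$ and $\psi\circ\sigma_{\varphi}$ fixing $e_1$ with first-coordinate derivative $1$, dispatch the parabolic case with Theorem~\ref{parabolics}, use Theorem~\ref{samefirst} and Proposition~\ref{niceform2} to reduce the remaining case to maps of the form $(z_1,A'z')$ and $(z_1,A''z')$, and finally use univalence of $\sigma_{\varphi}$ to propagate the equality from the open set $\sigma_{\varphi}(B_N)$ to all of $B_N$. The computation you defer as the ``main obstacle'' is exactly what the paper carries out along the sequence $\omega_t=(t,0',\sqrt{1-t},0'')$: the limit of $1-\rho^2$ is $(2-\lambda^2)(2-\gamma^2)/|2-\langle Z,W\rangle|^2$ for the corresponding columns $Z,W$ of the two matrices, and a Cauchy--Schwarz argument shows this is strictly less than $1$ unless $Z=W$, which confirms your expected conclusion.
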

\begin{proof}
By the symmetric roles of $\varphi$ and $\psi$ we may assume
that $\varphi$ is univalent.
The hypothesis $\|\varphi\|_{\infty}=1$ implies that there exists $\zeta$ 
in $\partial B_N$ with $|\varphi(\zeta)|=1$.
Composing on the left and right
by unitaries, there is no loss of generality in assuming $\zeta=e_1$ and $\varphi(e_1)=e_1$.
By Theorem~\ref{samefirst}, we must have
$\psi(e_1)=e_1$ as well.  

Let $\sigma_{\varphi}$ be the 
Krein adjoint of $\varphi$ as defined in Equation~(\ref{kreinadjdef}).  Since $C_{\varphi}-C_{\psi}$ is compact
and $C_{\sigma_{\varphi}}$ is bounded, 
$$C_{\sigma_{\varphi}}(C_{\varphi}-C_{\psi})=C_{\varphi\circ\sigma_{\varphi}}-C_{\psi\circ\sigma_{\varphi}}$$
is also compact.  Set
$\tau=\varphi\circ\sigma_{\varphi}$ and $\xi=\psi\circ\sigma_{\varphi}$.
We have
$$\tau(e_1)=\varphi\circ\sigma_{\varphi}(e_1)=e_1$$
and thus by Theorem~\ref{samefirst}, $\xi(e_1)=e_1$ and $D_1\tau_1(e_1)=D_1\xi_1(e_1)$.
A computation shows that $D_1\tau_1(e_1)=1$, (details of this computation can
be found in the proof of Theorem 2 in \cite{mac-weir}) so that
$$D_1\xi_1(e_1)=1.$$

We claim that $\tau=\xi$.  To see this, note that it is immediate by Theorem~\ref{parabolics}
if neither $\tau$ nor $\xi$ have any fixed point in the open ball $B_N$.
Suppose next that $\tau$
has a fixed point in the open ball
and lying in the complex line $[e_1]$.
Restricting $\tau$ to the intersection of $[e_1]$ and the ball,
we see that $\tau$ must be the identity on
$[e_1]\cap \overline{B_N}$, since $D_1\tau_1(e_1)=1$ (see, for example,
Problem 2.38 in \cite{cmbook}, p. 60).
By Proposition~\ref{niceform2} we see that $\tau$ has
the form $\tau(z_1,z')=(z_1,Az')$ for some $(N-1)\times(N-1)$ matrix
$A$. We label the entries of $A$ as $a_{jk}$ for $j,k=2,\cdots, N$.  
Since $C_{\tau}-C_{\xi}$ is compact, we appeal to Theorem~\ref{samefirst}
to see that, since $\tau$ is the identity at each point of $[e_1]\cap\partial B_N$,
so is $\xi$.  Applying Proposition~\ref{niceform2} again, we see that $\xi(z_1,z')=(z_1,Mz')$
for an $(N-1)\times(N-1)$ matrix $M=(m_{jk})$, $j,k=2,\cdots, n$.
Our goal is to show that $A=M$.

Fix $j$ with $2\leq j\leq N$, and consider the pseudohyperbolic
distance $\rho(\tau(\omega_t),\xi(\omega_t))$ at points of the form
\begin{equation}\label{path}
\omega_t=(t,0,\cdots,\sqrt{1-t},0\cdots,0)=(t,0',\sqrt{1-t},0''),
\end{equation}
for $0<t<1$, where the $\sqrt{1-t}$ appears in the $j^{th}$ component.
These points lie in the ball $B_N$.
A computation shows that
$$1-\rho^2(\tau(\omega_t),\xi(\omega_t))$$ is equal to
$$\frac{[1-t^2-(1-t)\sum_{k=2}^N|a_{kj}|^2][1-t^2-(1-t)\sum_{k=2}^N|m_{kj}|^2]}
{|1-t^2-(1-t)\sum_{k=2}^Na_{kj}\overline{m_{kj}}|^2},
$$ and further computation shows that as $t\uparrow 1$ this has limit
equal to
\begin{equation}\label{lhopital}
\frac{(2-\lambda^2)(2-\gamma^2)}{(2-\sum_ka_{kj}\overline{m_{kj}})(2-\sum_k\overline{a_{kj}}m_{kj})}
\end{equation}
where 
$$\lambda=\left(\sum_{k=2}^N|a_{kj}|^2\right)^{1/2}$$ and 
$$\gamma=\left(\sum_{k=2}^N|m_{kj}|^2\right)^{1/2}.$$ 
Observe that $\lambda$ and $\gamma$ are at most $1$, since
$\tau$ and $\xi$ map the ball into itself.
Write $Z=(a_{2j},a_{3j}\cdots,a_{Nj})$ and $W=(m_{2j},m_{3j},\cdots,m_{Nj}),$
so that $\lambda=\|Z\|$ and $\gamma=\|W\|$.
Moreover, the denominator in~(\ref{lhopital}) is
$|2-\langle Z,W\rangle|^2$,
and by the Cauchy-Schwarz inequality,
$|\langle Z,W\rangle|\leq \lambda\gamma$ with equality only if either $Z=c W$
for some $c\in {\mathbb C}$ or one of $Z,W$ is $0$.

We investigate the condition under
which the expression in~(\ref{lhopital}) is equal to $1$.
We have
$$\frac{(2-\lambda^2)(2-\gamma^2)}{|2-\langle Z,W\rangle|^2}
\leq \frac{(2-\lambda^2)(2-\gamma^2)}{(2-|\langle Z,W\rangle|)^2}
\leq \frac{(2-\lambda^2)(2-\gamma^2)}{(2-\lambda\gamma)^2}\leq 1,$$
with the last inequality following from its equivalence to
$(\lambda-\gamma)^2\geq 0$.
Thus if the expression in~(\ref{lhopital}) is equal to $1$, 
we must have $\lambda=\gamma$ and 
$$|2-\langle Z,W\rangle|=2-|\langle Z,W\rangle|=2-\|Z\|\|W\|.$$
Together these force $Z=W$, which says that the 
 $(j-1)^{st}$ column of $A$ is the
same as the $(j-1)^{st}$ column of $M$.
Thus if $A\neq M$, $\rho(\tau,\xi)$ has a strictly positive limit along some path
as in Equation~(\ref{path}).  
The ratio
$$\frac{1-|z|^2}{1-|\tau(z)|^2}$$
has the positive limit $(2-\lambda^2)^{-1}$ along the same path. 
Applying Theorem~\ref{phdist} we have
a contradiction to the hypothesis that $C_{\tau}-C_{\xi}$ is compact.
Thus $A=M$,
verifying the claim under the assumption that $\tau$ has a fixed point
in $[e_1]\cap B_N$.

Finally, suppose $\tau$ has a fixed point in the intersection of the open ball
and the complex line through $\eta$ and $e_1$ for some $\eta\in\partial B_N$, but
not in $[e_1]$.  Since the automorphisms act doubly transitively on
$\partial B_N$, we may find an automorphism $\Phi$ of the ball,
fixing $e_1$ so that $\widetilde{\tau}\equiv\Phi^{-1}\tau\,\Phi$
fixes $e_1$ and a point of $[e_1]\cap B_N$. 
A computation shows that $D_1\widetilde{\tau}_1(e_1)=1$; this
computation is aided by the 
fact that 
$$D_1\widetilde{\tau}_1(e_1)=\langle \widetilde{\tau}\,'(e_1)e_1,e_1\rangle$$
and the observation that since
$\tau, \Phi$ and $\Phi^{-1}$ all fix $e_1$, we have
$$D_k\tau_1(e_1)=0, D_k\Phi_1(e_1)=0, D_k\Phi_1^{-1}(e_1)=0\mbox{ for all }k=2,3,\ldots,N$$
(\cite{cmbook}, Lemma 6.6). 
Conjugating $\xi$ by $\Phi$ as well to get
$\widetilde{\xi}\equiv\Phi^{-1}\xi\Phi$,
we apply the previous argument to see that $\widetilde{\tau}=\widetilde{\xi}$,
and hence $\tau=\xi$, in this case as well.

Thus compactness of $C_{\varphi}-C_{\psi}$ implies that 
$\tau=\xi,$
or equivalently
\begin{equation}\label{tauxi}
\varphi\circ\sigma_{\varphi}=\psi\circ\sigma_{\varphi}
\end{equation}
on $B_N$, where
$\sigma_{\varphi}$ is the Krein adjoint of $\varphi$.
From this we see that $\varphi$ and $\psi$ agree on the range of $\sigma$.
Since we are assuming that $\varphi$ is univalent, so is $\sigma_{\varphi}$ (\cite{cowenmac})
and it follows that $\varphi=\psi$, since the
range of $\sigma_{\varphi}$ is an open set in $B_N$. 
\end{proof}

The final step is to remove the hypothesis of univalence in the last result
to obtain the full proof of Theorem~\ref{scvmain}, which we turn to next.
It will be helpful to recast our Hilbert space ${\mathcal H}$
as weighted Hardy spaces, defined below, and consider
restriction and extension
operators on these weighted Hardy spaces. 

If $f$ is analytic in
$B_N$, then $f$ has a homogeneous
expansion
\[ f= \sum_s f_s, \]
where, for each $z \in B_N$, we have
\begin{equation}\label{homogeneousfnc}
 f_s(z) = \sum_{|\alpha|=s} c_{\alpha} z^{\alpha}. 
 \end{equation}
Here, $\alpha = (\alpha_1, \dots, \alpha_N)$ and $|\alpha| = \alpha_1 + \cdots +
\alpha_N$. The Hardy space $H^2(B_N)$ is equivalently
defined as 
\begin{equation}\label{h2altdef}
\{f\mbox{ analytic in }B_N:\sum_{s=0}^{\infty}\|f_s\|_2^2<\infty\},
\end{equation}
where $\| \cdot \|_{2}$ is the norm in $L^2(\sigma)$.
The sum in~(\ref{h2altdef}) is $\|f\|_{H^2(B_N)}^2$.
More generally, given a suitable sequence of
positive numbers $\{ \beta(s) \}$, the weighted Hardy space
$H^2(\beta, B_N)$ is the set of
functions $f$ which are analytic in $B_N$ and which satisfy
\[ \|f\|_{H^2(\beta, B_N)}^2\equiv \sum_{s=0}^{\infty} \|f_s\|_{2}^2\  \beta(s)^2 <
\infty. \]
 Note that,
since the monomials
$z^{\alpha}$ are orthogonal on $L^2(\sigma)$ (\cite{Ru}, Section 1.4),
\[ \|f_s\|_{2}^2 =  \sum_{|\alpha|=s} |c_{\alpha}|^2 \| z^{\alpha} \|^2_{2}
= \sum_{|\alpha|=s}
|c_{\alpha}|^2 \frac{(N-1)! \alpha!}{(N-1+s)!} . \]

The next result realizes the weighted Bergman spaces $A^2_{\gamma}(B_N)$ as weighted
Hardy spaces.

\begin{lemma} \label{lem:equiv1}
\begin{itemize}
\item[(a)]
Let $\gamma>-1$ and set $\beta(s)^2 = (s+1)^{-(\gamma+1)}$. We have
$H^2(\beta,B_N)=A^2_{\gamma}(B_N)$, with equivalent norms.
\item[(b)] Let $K $ be an integer with $1 \leq K < N$, and let 
\[ \beta(s)^2=\frac{(N-1)!(K-1+s)!}{(K-1)!(N-1+s)!}(s+1)^{-(\gamma+1)}  \]
where $\gamma\geq -1$.
Then $H^2(\beta,B_K)=A^2_{N-K+\gamma}(B_K)$, with equivalent norms.
\end{itemize}
\end{lemma}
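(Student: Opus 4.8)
The plan is to prove both parts by comparing the $\mathcal H$-norm of a monomial with its norm in the relevant weighted Hardy space, using that in every space in sight the monomials $z^\alpha$ form an orthogonal system. For part (a), if $|\alpha|=s$ then, directly from the definition of the weighted Hardy norm,
\[
\|z^\alpha\|_{H^2(\beta,B_N)}^2=\frac{(N-1)!\,\alpha!}{(N-1+s)!}\,(s+1)^{-(\gamma+1)} .
\]
On the other hand, writing $z=r\zeta$ with $r\in(0,1)$ and $\zeta\in\partial B_N$ and slicing $d\nu$ in the usual way, the orthogonality of the monomials on $L^2(\sigma)$ together with the Beta-integral $\int_0^1 r^{2N-1+2s}(1-r^2)^{\gamma}\,dr=\tfrac12 B(N+s,\gamma+1)$ yields
\[
\|z^\alpha\|_{A^2_{\gamma}(B_N)}^2=c_{N,\gamma}\,\frac{\Gamma(N+s)}{\Gamma(N+s+\gamma+1)}\,\frac{(N-1)!\,\alpha!}{(N-1+s)!},
\]
where $c_{N,\gamma}$ depends only on $N$ and $\gamma$; in particular this same slicing shows the monomials are mutually orthogonal in $A^2_\gamma(B_N)$. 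Thus the two norms agree on each monomial up to the factor $\Lambda(s):=c_{N,\gamma}\,\Gamma(N+s)\,(s+1)^{\gamma+1}/\Gamma(N+s+\gamma+1)$, and (a) reduces to showing that $\Lambda(s)$ is bounded above and below by positive constants, uniformly for $s=0,1,2,\dots$.

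To see this I would use the elementary asymptotic $\Gamma(x+a)/\Gamma(x+b)\sim x^{a-b}$ as $x\to\infty$, which gives $\Gamma(N+s)/\Gamma(N+s+\gamma+1)\sim s^{-(\gamma+1)}$ and hence $\Lambda(s)\to c_{N,\gamma}>0$. A sequence of strictly positive numbers converging to a positive limit is pinched between two positive constants, so $\Lambda$ is, and the term-by-term comparison of $\sum_s\|f_s\|_2^2\beta(s)^2$ with $\|f\|_{A^2_\gamma(B_N)}^2=\sum_s\sum_{|\alpha|=s}|c_\alpha|^2\|z^\alpha\|_{A^2_\gamma}^2$ then shows the two spaces coincide with equivalent norms.

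For part (b) I would not redo any integral, but instead apply part (a) in the ball $B_K$ with Bergman parameter $\gamma'=N-K+\gamma$; this is legitimate since $\gamma\ge-1$ and $N-K\ge1$ force $\gamma'\ge0>-1$. By (a), $A^2_{N-K+\gamma}(B_K)=H^2(\widetilde\beta,B_K)$ with equivalent norms, where $\widetilde\beta(s)^2=(s+1)^{-(N-K+\gamma+1)}$. It then suffices to check that the weight $\beta$ of part (b) is comparable to $\widetilde\beta$. A direct cancellation gives
\[
\frac{\beta(s)^2}{\widetilde\beta(s)^2}=\frac{(N-1)!}{(K-1)!}\,\frac{\Gamma(K+s)}{\Gamma(N+s)}\,(s+1)^{N-K},
\]
and once more $\Gamma(K+s)/\Gamma(N+s)\sim s^{K-N}$, so this ratio converges to the positive constant $(N-1)!/(K-1)!$ and hence lies between positive constants. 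Combining with (a), $H^2(\beta,B_K)=H^2(\widetilde\beta,B_K)=A^2_{N-K+\gamma}(B_K)$ with equivalent norms.

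Everything here is routine bookkeeping with factorials and Gamma functions; the only points that call for any care are the passage from ``asymptotically comparable'' to ``uniformly comparable over all $s\ge0$'' --- handled by the remark that a positive convergent sequence is bounded away from $0$ and $\infty$ --- and the (easy) verification, used twice, that the monomials are orthogonal with respect to $w_\gamma\,d\nu$ on $B_N$ and $w_{N-K+\gamma}\,d\nu$ on $B_K$, which is immediate from the polar decomposition of those measures.
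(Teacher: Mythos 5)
Your proposal is correct and follows essentially the same route as the paper: reduce both norms to weights on monomials, then show the ratio of weights is bounded above and below via Stirling-type asymptotics for Gamma quotients (the paper cites Zhu's Lemma~1.11 for $\|z^\alpha\|_{A^2_\gamma}^2$ where you rederive it by slicing, and in part (b) the paper gives explicit two-sided bounds where you use the limit-of-a-positive-sequence remark, but these are cosmetic differences).
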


\begin{proof}
We prove (a) first.  Let $f$ be analytic in $B_K$ with homogeneous expansion $f= \sum f_s$, where
$f_s$ is as in Equation~(\ref{homogeneousfnc}).
If $\beta(s)^2=(s+1)^{-(\gamma+1)}$, we have
\[ \|f\|_{H^2(\beta, B_N)}^2 = \sum_{s=0}^{\infty} \sum_{|\alpha|=s}
|c_{\alpha}|^2 \frac{\alpha!
(N-1)!}{\Gamma(N+s)} \cdot \frac{1}{(s+1)^{\gamma+1}}  \]
and
\[ \|f\|_{A^2_{\gamma}(B_N)}^2 = \sum_{s=0}^{\infty} \sum_{|\alpha|=s} |c_{\alpha}|^2
\frac{ \alpha!
\Gamma(N+\gamma+1)}{\Gamma(N+s+\gamma+1)} . \]
This last formula follows from the fact that 
\[ \|z^{\alpha} \|_{A^2_{\gamma}(B_K)}^2 = \frac{\alpha!\Gamma(N+\gamma+1)}{\Gamma(N+|\alpha|+\gamma+1)} \]
(see Lemma~1.11 in \cite{Zhubook}.) 

The result in (a) will follow if we can show that
$$ \left( \frac{\alpha! (N-1)!}{\Gamma(N+s)} \cdot \frac{1}{(s+1)^{\gamma+1}} \right)
\cdot \left( \frac{\Gamma(N+s+\gamma+1)}{
\alpha! \Gamma(N+\gamma+1)} \right) $$
is bounded above and below by positive constants, depending only
on $N$ and $\gamma$, for all $s\geq 0$.
This follows easily from the fact that, by Stirling's formula,
$$\lim_{s\rightarrow \infty}\frac{\Gamma(N+s+\gamma+1)}{(s+1)^{\gamma+1}\Gamma(N+s)}=1.$$

From (a) we know that $A^2_{N-K+\gamma}(B_K) = H^2(\widetilde
\beta, B_K)$ where $\widetilde
\beta(s)^2 = (s+1)^{-(N-K+\gamma+1)}$.
Thus it suffices to show that
$$\left[\frac{(N-1)!(K-1+s)!}{(K-1)!(N-1+s)!}(s+1)^{-(\gamma+1)}\right]\cdot\left[(s+1)^{(N-K+\gamma+1)}\right]$$
is bounded
above and below by positive constants (depending on $N$ and $K$)
for all $s\geq 0$.  Straightforward estimates show that
$$\frac{(N-1)!}{(K-1)!}\geq \frac{(N-1)!(K-1+s)!}{(K-1)!(N-1+s)!}\cdot (s+1)^{N-K}\geq
\frac{(N-1)!}{(K-1)!} \left(\frac{1}{N+1}\right)^{N-K}$$
for all $s\geq 0$, and the desired result follows.

\end{proof}

Since the proof of Theorem~\ref{scvmain} ultimately relies on an inductive argument,
we will work with certain restriction and extension operators on weighted Hardy spaces.
These are defined next.

Let $K, N \in \mathbb{N}$ with $1 \leq K < N$. Given a sequence $\{ \beta(s) \}$
of positive numbers, we define the associated sequence $\{ \widetilde \beta(s) \}$ by
\[ \widetilde \beta(s)^2 = \frac{(N-1)!(K-1+s)!}{(K-1)!(N-1+s)!} \beta(s)^2. \]
We can then define the extension operator $E: H^2(\widetilde \beta, B_K) \rightarrow H^2(\beta, B_N)$ by 
\[ (Ef)(z_1, \dots, z_N) = f(z_1, \dots, z_K), \] 
and the restriction operator $R: H^2(\beta, B_N)\rightarrow H^2(\widetilde \beta, B_K)$ by
\[ (Rf)(z_1, \dots, z_K) = f(z_1, \dots, z_K, 0'). \]
The next result establishes properties of these operators;
it is an extension of Proposition 2.21 in \cite{cowenmac} which applies to the case $K=1$.
\begin{lemma}\label{restrictextend}
\begin{itemize}
\item[(a)] The extension operator $E$ is an isometry of $H^2(\widetilde \beta, B_K)$ into $H^2(\beta, B_N)$.

\item[(b)] The restriction operator $R$ is a norm-decreasing map of $H^2(\beta, B_N)$ onto $H^2(\widetilde \beta, B_K)$. 

\end{itemize}
\end{lemma}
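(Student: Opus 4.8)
The plan is to verify (a) and (b) by direct computation with the homogeneous expansions, exploiting the fact that the norm on a weighted Hardy space depends only on the coefficients $c_\alpha$ of monomials in the expansion and the corresponding weights $\beta(s)^2 \|z^\alpha\|_2^2$. The crucial bookkeeping fact is that $\|z^\alpha\|_2^2$ in $L^2(\sigma_K)$ on $\partial B_K$ equals $(K-1)!\,\alpha!/(K-1+|\alpha|)!$, whereas in $L^2(\sigma_N)$ on $\partial B_N$ for the \emph{same} multi-index $\alpha$ (now padded with $N-K$ zeros) it equals $(N-1)!\,\alpha!/(N-1+|\alpha|)!$. The definition of $\widetilde\beta$ is precisely the ratio needed to make these two weights coincide after rescaling; this is the whole point of the choice $\widetilde\beta(s)^2 = \frac{(N-1)!(K-1+s)!}{(K-1)!(N-1+s)!}\beta(s)^2$.

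For part (a): given $f$ analytic in $B_K$ with homogeneous expansion $f=\sum_s f_s$, $f_s(z)=\sum_{|\alpha|=s}c_\alpha z^\alpha$ (a multi-index in $\mathbb{N}^K$), the extension $Ef$ has the same coefficients, now viewed as a function on $B_N$ whose homogeneous term of degree $s$ is $\sum_{|\alpha|=s} c_\alpha z^\alpha$ with $\alpha$ a multi-index in $\mathbb{N}^N$ supported on the first $K$ entries. I would compute
\[
\|Ef\|_{H^2(\beta,B_N)}^2 = \sum_{s=0}^\infty \beta(s)^2 \sum_{|\alpha|=s} |c_\alpha|^2 \frac{(N-1)!\,\alpha!}{(N-1+s)!}
\]
and
\[
\|f\|_{H^2(\widetilde\beta,B_K)}^2 = \sum_{s=0}^\infty \widetilde\beta(s)^2 \sum_{|\alpha|=s} |c_\alpha|^2 \frac{(K-1)!\,\alpha!}{(K-1+s)!},
\]
and observe that substituting the definition of $\widetilde\beta(s)^2$ into the second expression makes the two term-by-term equal. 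Hence $E$ is an isometry. (One should note $\alpha!$ is unaffected by padding with zeros, so the same $c_\alpha$ appear with matching combinatorial factors.)

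For part (b): the restriction $Rf(z_1,\dots,z_K)=f(z_1,\dots,z_K,0')$ kills every monomial in the homogeneous expansion of $f$ that involves any of the variables $z_{K+1},\dots,z_N$, and leaves the coefficients $c_\alpha$ of the purely-first-$K$-variable monomials unchanged. So $\|Rf\|_{H^2(\widetilde\beta,B_K)}^2$ equals the sum in the isometry computation above restricted to those surviving multi-indices, which is at most $\|f\|_{H^2(\beta,B_N)}^2$ (the full sum also includes the non-negative contributions of the discarded monomials). Thus $R$ is norm-decreasing. Surjectivity is immediate: $R\circ E$ is the identity on $H^2(\widetilde\beta,B_K)$ by part (a), so every $g\in H^2(\widetilde\beta,B_K)$ is $R(Eg)$. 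The only mild obstacle is being careful that the weighted-Hardy-space norm genuinely decomposes over monomials (orthogonality of the $z^\alpha$ in $L^2(\sigma)$, already recorded in the excerpt) and that "analytic in $B_N$ with finite weighted norm" is preserved under restriction — but restriction of an analytic function to a coordinate slice is plainly analytic, and the norm bound just obtained shows it lands in the space. No genuine difficulty arises; the content is entirely in matching the combinatorial weights, which the definition of $\widetilde\beta$ is engineered to do.
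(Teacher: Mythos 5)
Your proposal is correct and follows essentially the same route as the paper: both parts reduce to matching the combinatorial factors $\frac{(K-1)!\,\alpha!}{(K-1+s)!}\widetilde\beta(s)^2 = \frac{(N-1)!\,\alpha!}{(N-1+s)!}\beta(s)^2$ term by term in the homogeneous expansion, with (b) obtained by discarding the nonnegative contributions of monomials involving $z_{K+1},\dots,z_N$ and surjectivity via the extension (the paper writes out $F=Ef$ explicitly rather than invoking $R\circ E=\mathrm{id}$, but the content is identical).
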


\begin{proof}

For (a), let $f \in H^2(\widetilde \beta, B_K)$ with homogeneous expansion $f= \sum f_s$ 
with $f_s$ as in Equation~(\ref{homogeneousfnc}) for $z\in {\mathbb C}^K$.
Then $Ef = \sum f_s$ also, and writing $\|f_s\|_{2,K}$ for the norm
of $f_s$ in $L^2(\partial B_K,\sigma)$ we have
\begin{align*}
\| f \|^2_{H^2(\widetilde \beta, B_K)} & = \sum_s \|f_s\|^2_{2,K} \widetilde \beta(s)^2 \\
 & = \sum_s \sum_{|\alpha|=s} |c_{\alpha}|^2 \frac{(K-1)! \alpha!}{(K-1+s)!} \widetilde \beta(s)^2 \\
 & = \sum_s \sum_{|\alpha|=s} |c_{\alpha}|^2 \frac{(K-1)! \alpha!}{(K-1+s)!} \cdot \frac{(N-1)!(K-1+s)!}{(K-1)!(N-1+s)!} \beta(s)^2 \\
 & = \sum_s \sum_{|\alpha|=s} |c_{\alpha}|^2 \frac{(N-1)! \alpha!}{(N-1+s)!} \beta(s)^2 \\
 & = \sum_s \|f_s\|^2_{2,N} \beta(s)^2 \\
 & = \| Ef \|^2_{H^2(\beta, B_N)}. 
\end{align*}
Therefore, $E$ is an isometry.

For (b) let $f \in H^2(\beta, B_N)$ have homogeneous expansion
$f=\sum f_s$ with $f_s$ as in Equation~(\ref{homogeneousfnc}) for $z\in B_N$.
For each nonnegative integer $s$, let $A_s = \{ \alpha: \alpha = (\alpha', 0') \}$, where $\alpha'$ is a multi-index with $K$ entries and $0'$ denotes the zero vector in $\mathbb{C}^{N-K}$, and let $B_s$ consist of all other multi-indices $\alpha$ with $N$ entries satisfying $|\alpha|=s$. Writing
\[ f_s(z) = \sum_{A_s} c_{\alpha} z^{\alpha} + \sum_{B_s} c_{\alpha} z^{\alpha}, \]
it follows that
\[ (Rf)(z_1, \dots, z_K) = \sum_s \sum_{A_s} c_{\alpha} z_1^{\alpha_1} \cdots z_K^{\alpha_K}, \]
and 
\begin{align*}
\|Rf\|^2_{H^2(\widetilde \beta, B_K)} & = \sum_s \sum_{A_s} |c_{\alpha}|^2 \frac{(K-1)! \alpha!}{(K-1+s)!} \widetilde \beta(s)^2 \\
 & = \sum_s \sum_{A_s} |c_{\alpha}|^2 \frac{(N-1)! \alpha!}{(N-1+s)!} \beta(s)^2 \\
 & \leq  \sum_s \sum_{|\alpha|=s} |c_{\alpha}|^2 \frac{(N-1)! \alpha!}{(N-1+s)!} \beta(s)^2 \\
 & = \| f \|^2_{H^2(\beta, B_N)}. 
\end{align*}
Therefore, $R$ is norm-decreasing. To see that $R$ is surjective, let $f \in H^2(\widetilde \beta, B_K)$ with
\[ f(z_1, \dots, z_K) = \sum_s \sum_{|\alpha'|=s} c_{\alpha'} z_1^{\alpha_1'} \cdots z_K^{\alpha_K'}, \]
where $\alpha'$ is a multi-index with $K$ entries. 
Then $f=RF$, where
\[ F(z) = \sum_s \sum_{A_s} c_{\alpha} z^{\alpha} . \]
and $c_{\alpha} \equiv c_{\alpha'}$ for $\alpha = (\alpha', 0')$ as before. Also,
\begin{align*} 
\|F\|^2_{H^2(\beta, B_N)} & = \sum_s \sum_{A_s} |c_{\alpha}|^2 \frac{(N-1)! \alpha!}{(N-1+s)!} \beta(s)^2 \\
 & = \sum_s  \sum_{A_s} |c_{\alpha}|^2 \frac{(K-1)! \alpha!}{(K-1+s)!} \widetilde \beta(s)^2 \\
 & = \sum_s  \sum_{|\alpha'|=s} |c_{\alpha'}|^2 \|z_1^{\alpha_1'} \cdots z_K^{\alpha_K'}\|^2_{2,K} \\ 
 & = \|f\|^2_{H^2(\widetilde \beta, B_K)} < \infty, 
\end{align*}
so $F \in H^2(\beta, B_N)$.

\end{proof}

Recall that by an \textit{affine subset of dimension $k$} in $B_N$, we mean the intersection
of $B_N$ with a translate of a $k$-dimensional subspace of ${\mathbb C}^N$.

\begin{proof}[Proof of Theorem~\ref{scvmain}]
Suppose that $\varphi$ and $\psi$ are linear-fractional maps with
$\|\varphi\|_{\infty}=\|\psi\|_{\infty}=1$. We will show that if 
$C_{\varphi}-C_{\psi}$ is compact on ${\mathcal H}$, then $\varphi=\psi$.
The hypothesis $\|\varphi\|_{\infty}=1$ implies that there exists a point
$\zeta$ 
in $\partial B_N$ with $|\varphi(\zeta)|=1$. Composing on the left and right
by unitaries, there is no loss of generality in assuming $\zeta=e_1$ and
$\varphi(e_1)=e_1$. By
Theorem~\ref{samefirst}, we must have
$\psi(e_1)=e_1$ as well. 

The argument proceeds exactly as in the proof of Theorem~\ref{univalentcase}
up to the point where the relationship
\begin{equation}\label{tauxi2}
\varphi\circ\sigma_{\varphi}=\psi\circ\sigma_{\varphi},
\end{equation}
is obtained.  Since Theorem~\ref{univalentcase} covers the case that
at least one of $\varphi$ and $\psi$ is one-to-one, we now
only consider the case that neither is univalent.
This implies that there is a smallest $k_1$ with $1\leq k_1<N$ so that $\varphi(B_N)$ 
is contained in an affine set of dimension $k_1$, and
there is a smallest $k_2$ with $1\leq k_2<N$ so that $\psi(B_N)$ is contained
in an affine set of dimension $k_2$.  
Since the roles of $\varphi$ and $\psi$ can be reversed,
there is no loss of generality in assuming $k_1\geq k_2$.

Our first goal is to show that equality
$k_1=k_2$
holds as a consequence of Equation~(\ref{tauxi2}).  
By Proposition 13 in \cite{cowenmac}, $\sigma_{\varphi}(B_N)$ is
also contained in an $k_1$-dimensional affine set.
Set $\sigma_{\varphi}(0)=p$, and let $\phi_p$ be
an automorphism of $B_N$ sending $p$ to $0$ and
satisfying $\phi_p=\phi_p^{-1}$.  
Since $\phi_p\circ\sigma_{\varphi}$ fixes the origin,
and maps the ball into a $k_1$-dimensional affine set,
we may write, as in \cite{cowenmac},
$$\phi_p\circ\sigma_{\varphi}=L\circ\tau$$
where $L$ is linear of rank $k_1$ and $\tau$ is a one-to-one linear fractional
map.  Specifically, if 
$$\phi_p\circ\sigma_{\varphi}=\frac{Az}{\langle z,C\rangle +1}$$
we can choose 
$$L(z)=Az\mbox{   and   }\tau(z)=\frac{z}{\langle z,C\rangle +1}.$$
(Note that $L$ and $\tau$ need not be self-maps of $B_N$, though
their composition is, and
both are defined and analytic on a neighborhood of the closed
ball).  From this it follows that 
$\sigma_{\varphi}=\phi_p\circ L\circ\tau$.  Taking Krein adjoints
on both sides we have
$\varphi=\sigma_{{\tau}}\circ L^*\circ\sigma_{\phi_p}$,
where $\sigma_{\tau}(z)=z-C$.
We have $\sigma_{\phi_p}=\phi_p^{-1}=\phi_p$, so that
\begin{equation}\label{dimcomp}
\varphi\circ\sigma_{\varphi}=\sigma_{\tau}\circ L^*\circ\sigma_{\phi_p}\circ\phi_p\circ L\circ\tau=
\sigma_{\tau}\circ L^*L\circ\tau,
\end{equation} where $L^*L$ is linear
with rank $k_1$, $\tau$ is univalent,
and $\sigma_{\tau}$ is a translation.  Thus the image of the ball $B_N$ under
$\varphi\circ\sigma_{\varphi}$ cannot be contained in a $k$ dimensional
affine set for any $k<k_1$, and the relation $\varphi(\sigma_{\varphi}(B_N))=\psi(\sigma_{\varphi}(B_N))
\subseteq\psi(B_N)$ says that strict inequality $k_1>k_2$ is impossible, and therefore $k_1=k_2$ as desired.
We denote the common value of $k_1$ and $k_2$ by $K$.  

Thus $\varphi(B_N)$ is contained in a $K$-dimensional
affine set $A_1$ and is not contained in any 
affine set of smaller dimension, and $\psi(B_N)$ is contained in a $K$-dimensional
affine set $A_2$, and is not contained in any affine set of
smaller dimension.  We have 
$$A_2\supseteq \psi(B_N)\supseteq \psi(\sigma_{\varphi}(B_N))=\varphi(\sigma_{\varphi}(B_N))$$
where $A_1\supseteq\varphi(\sigma_{\varphi}(B_N))=\sigma_{\tau}L^*L\tau(B_N)$
for linear $L^*L$ of rank $K$ and univalent linear fractional $\tau$ and $\sigma_{\tau}$.
This forces $A_1=A_2$; that is, the range of $\varphi$ and the range
of $\psi$ are contained in the same $K$-dimensional affine set, which we will simply denote $A$. Note
that $e_1 \in \overline{A}$. 

Our goal is to show that $\varphi=\psi$. Let $\zeta\in\partial B_N$
with $\zeta\neq e_1$.  Let $\Lambda_{\zeta}$ be a $K$-dimensional affine subset of
$B_N$, containing $e_1$ and
$\zeta$ in its boundary, whose intersection with $B_N$ is a $K$-dimensional ball.
We will write $\widetilde{B_K}$ for 
$\{(z_1,z_2,\ldots,z_K,0'')\equiv(z',0'')\in B_N\}$, where
$0''$ denotes the $0$ in ${\mathbb C}^{N-K}$.  Let $\rho_1$ be an automorphism
of $B_N$ fixing $e_1$ with $\rho_1(\widetilde{B_K})=\Lambda_{\zeta}$ and let $\rho_2$ be an automorphism
of $B_N$ fixing $e_1$ with $\rho_2(A)=\widetilde{B_K}$; such automorphisms exist because of the two-fold
transitivity of the automorphisms on $\partial B_N$.
Note that $\rho_2\circ\varphi\circ\rho_1$ and $\rho_2\circ\psi\circ\rho_1$ are linear-fractional
self-maps of $B_N$ with $\rho_2\circ\varphi\circ\rho_1(\widetilde{B_K})\subseteq \widetilde{B_K}$
and $\rho_2\circ\psi\circ\rho_1(\widetilde{B_K})\subseteq \widetilde{B_K}$.  
Let $\pi$ be the projection of ${\mathbb C}^N$ onto ${\mathbb C}^K$ defined
by $\pi(z',z'')=z'$ and define maps $\mu$ and $\nu$ on $B_K$ 
by $$\mu(z')=\pi\circ\rho_2\circ\varphi\circ\rho_1(z',0'')$$
and
$$\nu(z')=\pi\circ\rho_2\circ\psi\circ\rho_1(z',0'').$$
These are linear-fractional self-maps of
$B_K$ fixing $(1,0\ldots,0)\in\partial B_K$.  

Write ${\mathcal H}$ as $H^2(\beta, B_N)$ with $\beta(s)^2=(s+1)^{-(\gamma+1)}$,
where $\gamma=-1$ if ${\mathcal H}=H^2(B_N)$
and $\gamma=\alpha$ if
${\mathcal H}=A^2_{\alpha}(B_N)$, up to an 
equivalent norm.
We claim that $C_{\mu}-C_{\nu}$ is compact on the weighted Hardy space
$H^2(\widetilde{\beta},B_K)$ where 
$$\widetilde{\beta}(s)^2=\frac{(N-1)!(K-1+s)!}{(K-1)!(N-1+s)!}\beta(s)^2.  $$
Since, by
Lemma~\ref{lem:equiv1}, $H^2(\widetilde{\beta},B_K)=A^2_{N-K+\gamma}(B_K)$ with equivalent norms, it will follow
that $C_{\mu}-C_{\nu}$ is compact on $A^2_{N-K+\gamma}(B_K)$.  

To prove the claim it suffices to show that if $\{f_n\}$ is a bounded sequence
on $H^2(\widetilde{\beta},B_K)$ and
$f_n\rightarrow 0$ almost uniformly on $B_K$, then 
$$\|(C_{\mu}-C_{\nu})f_n\|_{H^2(\widetilde{\beta},B_K)}\rightarrow 0.$$
Let $f_n$ be such a sequence in $H^2(\widetilde{\beta},B_K)$.  Define functions
$F_n$ on $B_N$ by
$$F_n=Ef_n,$$
where $E: H^2(\widetilde{\beta}, B_K) \rightarrow H^2(\beta,B_N)$ is the extension operator
defined by 
\[ (Ef)(z_1, \dots, z_N) = f(z_1, \dots, z_K). \]
By Lemma~\ref{restrictextend}, $E$ is an isometry, so the functions $F_n$ form a
bounded sequence in ${\mathcal H}$
and $F_n\rightarrow 0$ uniformly on compact subsets of $B_N$.  Since
$C_{\rho_2\circ\varphi\circ\rho_1}-C_{\rho_2\circ\psi\circ\rho_1}=C_{\rho_1}(C_{\varphi}-C_{\psi})C_{\rho_2}$ is
compact on ${\mathcal H}$, we have
$$\|F_n\circ\rho_2\circ\varphi\circ\rho_1-F_n\circ\rho_2\circ\psi\circ\rho_1\|_{H^2(\beta,B_N)} \rightarrow 0$$
Define the restriction operator $R: H^2(\beta,B_N)\rightarrow H^2(\widetilde{\beta}, B_K)$ by
\[ (Rf)(z_1, \dots, z_K) = f(z_1, \dots, z_K, 0''). \]  
By Lemma~\ref{restrictextend}, $R$ is a norm-decreasing map of $H^2(\beta,B_N)$ onto
$H^2(\widetilde{\beta}, B_K)$ and
so  
$$\|R(F_n\circ\rho_2\circ\varphi\circ\rho_1)-R(F_n\circ\rho_2\circ\psi\circ\rho_1)\|_{H^2(\widetilde{\beta}, B_K)}
\rightarrow 0$$
But $R(F_n\circ\rho_2\circ\varphi\circ\rho_1)=f_n\circ\mu$ on $B_K$ and
$R(F_n\circ\rho_2\circ\psi\circ\rho_1)=f_n\circ\nu$ on $B_K$,
so the claim is verified, and $C_{\mu}-C_{\nu}$ is compact on $A^2_{ N-K+\gamma}(B_K)$.
Since $K<N$ and $\mu$ and $\nu$ are linear-fractional self-maps of $B_K$
with $\|\mu\|_{\infty}=\|\nu\|_{\infty}=1$, by induction this forces
$\mu = \nu$, which in turn says that $\varphi=\psi$ on the affine set
$\Lambda_{\zeta}$ containing $\zeta$ and $e_1$. 
Since $\zeta $ is an arbitrary
point in $\partial B_N$, this says $\varphi=\psi$ in $B_N$.

\end{proof}

\end{document}